\definecolor{red}{rgb}{1.00,0.00,0.00}
{\numberwithin{equation}{section}
\setlength{\parindent}{1em}

\newtheorem{theorem}{Theorem}[section]
\newtheorem{lemma}{Lemma}[section]

\renewcommand{\footnotesize}{\scriptsize}
\newcommand{\normmm}[1]{{\left\vert\kern-0.25ex\left\vert
\kern-0.25ex\left\vert #1
    \right\vert\kern-0.25ex\right\vert\kern-0.25ex\right\vert}}

\geometry{left=3cm,right=3cm,top=4cm,bottom=2.5cm}
\newcommand{\bs}{\boldsymbol}
\newcommand{\matr}[1]{#1}

\begin{document}           

\title{A uniformly robust staggered DG method for the unsteady Darcy-Forchheimer-Brinkman problem}
\author{Lina Zhao\footnotemark[1]\qquad
\;Ming Fai Lam\footnotemark[2]\qquad
\;Eric Chung\footnotemark[3]}
\renewcommand{\thefootnote}{\fnsymbol{footnote}}
\footnotetext[1]{Department of Mathematics, The Chinese University of Hong Kong, Hong Kong SAR, China. ({lzhao@math.cuhk.edu.hk}).}
\footnotetext[2]{Department of Mathematics, The Chinese University of Hong Kong, Hong Kong SAR, China. ({mflam@math.cuhk.edu.hk}).}
\footnotetext[3]{Department of Mathematics, The Chinese University of Hong Kong, Hong Kong SAR, China. ({tschung@math.cuhk.edu.hk}).}

\maketitle

\textbf{Abstract:} In this paper we propose and analyze a uniformly robust staggered DG method
for the unsteady Darcy-Forchheimer-Brinkman problem. Our formulation is based on
velocity gradient-velocity-pressure and the resulting scheme can be flexibly
applied to fairly general
polygonal meshes. We relax the tangential continuity for velocity,
 which is the key ingredient in achieving the uniform robustness. We present well-posedness
and error analysis for both the semi-discrete scheme and the fully discrete scheme,
 and the theories indicate that the error estimates for velocity are independent of pressure.
 Several numerical experiments are presented to confirm the theoretical findings.

\textbf{Keywords:} staggered DG method, Brinkman-Forchheimer, general meshes, uniformly stable

\pagestyle{myheadings} \thispagestyle{plain}
\markboth{ZhaoLamChung} {SDG method for Darcy-Forchheimer-Brinkman problem}

\section{Introduction}

The Brinkman problem can model fluid motion in porous media with fractures. A distinctive
 feature of Brinkman model is that it can behave like Stokes or Darcy problem by tuning
 the parameters related to the fluid viscosity and the medium permeability, respectively.
 But this at the same time brings additional technical difficulties for the design of
 numerical schemes that are robust in both Stokes and Darcy regimes. In general,
 the traditional Stokes stable elements will suffer from loss of convergence when
  the Brinkman problem becomes Darcy dominating, vice versa. To overcome this issue,
  numerous numerical schemes have been developed for the Birnkman problem
 \cite{Konno11,MardalTaiWinther02,BurmanHansbo05,BurmanHansbo07,Badia09,Guzman12}.
Darcy's law is widely employed to model flow in porous media, becomes
unreliable for Reynolds numbers greater than one. The Forchheimer model \cite{Forchheimer1901}
accounts for faster flows by including a nonlinear inertial term and has
been extensively studied \cite{KimPark99,Park05,Girault08,PanRui12,RuiPan12,RuiLiu15}.
The Brinkman-Forchheimer model \cite{Payer99,Celebi06,Louaked17} combines the advantages of the two models and can
be used for fast flows in highly porous media.

Staggered discontinuous Galerkin method as a new generation of numerical schemes is initially proposed by Chung and Engquist
to solve wave propagation problems \cite{EricEngquistwave,ChungWave2}, and
inspires study for various partial differential equations arising from practical
applications \cite{ChungCiarletYu13,KimChung13,KimChungLam16,ChungQiu17,ChungParkZhao18,DuChung18}.
Recently, the concept of staggered DG method is integrated into polygonal methods
and have been successfully designed for numerous mathematical models that have important
 physical applications \cite{LinaPark18,LinaParkShin19,LinaParkSD20,LinaChungLam20,LinaChungParkZhou19,
 KimZhaoPark20,LinaParkElasticity20,LinaPark20}. A uniformly stable staggered DG method has been established
 for the Brinkman problem by relaxing the tangential continuity of velocity in \cite{LinaChungLam20}.
The numerical experiments presented therein verify that the proposed scheme is
robust with respect to viscosity and the accuracy of velocity remains almost the same
 for various values of viscosity. This feature is desirable in practical applications,
 therefore, the purpose of this paper is to extend the staggered DG method developed
 in \cite{LinaChungLam20} to the unsteady Darcy-Forchheimer-Brinkman problem.

Our formulation is based on velocity gradient-velocity-pressure,
and the finite element spaces for these three variables enjoy staggered continuity properties.
Specifically, the finite element space for velocity is continuous in
the normal direction over the dual edges and the finite element space for pressure is
continuous over the primal edges. Thus, our approach can be viewed as Darcy tailored method.
We also emphasize that relaxing tangential continuity for velocity is the crux in the design of
our uniformly stable scheme and the spaces exploited in \cite{LinaPark18,LinaPark20} will lead to loss of convergence when
viscosity approaches zero.
There are several desirable features of this approach, which can be summarized as follows:
First, our method can be flexibly applied to fairly general meshes possibly
including hanging nodes. Second, all the variables of physical interest can be
calculated simultaneously. Third, it is uniformly robust with respect to
the Brinkman parameter in both the Stokes and Darcy regimes,
and the accuracy of velocity remains almost the same with various values of the Brinkman parameter.
In this paper, we analyze the convergence for both the semi-discrete scheme
and the fully discrete scheme, where backward Euler is employed for the time discretization.
We can achieve optimal rates of convergence, and the convergence of velocity
is independent of the Brinkman parameter. Finally, we perform several numerical experiments
to verify the proposed theories and we can observe that our method is
uniformly robust with respect to the parameters, as expected, the accuracy of velocity
 remains almost the same for various values of the parameters.

The rest of the paper is organized as follows. In the next section,
we present the continuous weak formulation, in addition, the unique solvability and
stability is proved. In section~\ref{sec:sdg}, we describe the discrete formulation for the
unsteady Darcy-Forchheimer-Brinkman problem. Then unique solvability of the discrete
formulation and the error estimates are presented in section~\ref{sec:analysis}.
Several numerical experiments are carried out in section~\ref{sec:numerical} to illustrate
that the proposed method is uniformly robust with respect to the parameters.
Finally, a conclusion is given.

\section{The continuous formulation}

We consider the following unsteady Darcy-Forchheimer-Brinkman model:
\begin{equation}\label{sys0}
\begin{alignedat}{2}
  \bm{u}_t-\epsilon\Delta \bm{u}+\alpha \bm{u}+\beta|\bm{u}|\bm{u}+\nabla p &= \bm{f}\quad&&{\rm in~}\Omega, t\in[0,T],\\
  \text{div}\;\bm{u}&=0\quad&&{\rm in~}\Omega, t\in[0,T],\\
  \bm{u}&=\bm{0} \quad&&{\rm on~}\partial\Omega, t\in[0,T],\\
  \bm{u}(\bm{x},0)&=\bm{u}_0 \quad&&{\rm in~}\Omega,
\end{alignedat}
\end{equation}
where $\alpha>0$ is the Darcy coefficient,
$\epsilon>0$ is the Brinkman coefficient, $\beta>0$ is the Forchheimer coefficient
 and $\bm{f}\in L^2(\Omega)^2$ is
the external body force. In addition, we also assume
that there exist real numbers $\alpha_{\text{min}}$,
$\alpha_{\text{max}}$ and $\beta_{\text{max}}$ such
that $0<\alpha_{\text{min}}\leq \alpha\leq \alpha_{\text{max}}$ and $\beta\leq \beta_{\text{max}}$.
The unknowns are the velocity $\bm{u}$ and the pressure $p$, which are functions
of the spatial variable $\bm{x}$ and temporal variable $t$ in $[0,T]$ ($T>0$ is a finite time).
Here $|\bm{u}|=\sqrt{u_1^2+u^2_2+\cdots+u_n^2}$ for $\bm{u}\in\mathbb{R}^n$.

We introduce an additional unknown $L=\sqrt{\epsilon}\bm{u}$, then the above system of equations can be recast into the following first order system of equations:
\begin{equation}\label{sys1}
\begin{alignedat}{2}
  L-\sqrt{\epsilon}\nabla \bm{u}&=0\quad&&{\rm in~}\Omega, t\in[0,T],\\
  \bm{u}_t-\sqrt{\epsilon}\text{div}\;L+\alpha \bm{u}+\beta|\bm{u}|\bm{u}+\nabla p &= \bm{f}\quad&&{\rm in~}\Omega, t\in[0,T],\\
  \text{div}\;\bm{u}&=0\quad&&{\rm in~}\Omega, t\in[0,T],\\
  \bm{u}&=\bm{0} \quad&&{\rm on~}\partial\Omega, t\in[0,T],\\
  \bm{u}(\bm{x},0)&=\bm{u}_0 \quad&&{\rm in~}\Omega.
\end{alignedat}
\end{equation}

%
Here for simplicity we assume $\bm{u}_0=\bm{0}$ for the subsequent analysis. We introduce some notations that will be used later. For a set $D\subset \mathbb{R}^{2}$, we denote the scalar product in $L^{2}(D)$ by $(\cdot,\cdot)_{D}$, namely $(p,q)_{D}:=\int_{D} p\,q \;dx$, we use the same symbol $(\cdot,\cdot)_{D}$ for the inner product in $L^{2}(D)^{2}$ and in $L^{2}(D)^{2\times 2}$. More precisely $(\sigma,\tau)_{D}:
=\sum_{i=1}^{2}\sum_{j=1}^{2}(\sigma^{ij},
\tau^{ij})_{D}$ for $\sigma,\tau\in L^2(D)^{2\times 2}$. When $D$
coincides with $\Omega$, the subscript $\Omega$ will be dropped
unless otherwise mentioned.
We denote by $(\cdot, \cdot)_e$ the scalar product in
$L^2(e), e\subset \mathbb{R}$ (or duality pairing), for a scalar, vector, or tensor functions.
Given an integer  $m\geq 0$ and $p\geq 1$, $W^{m,p}(D)$ and
$W_0^{m,p}(D)$ denote the usual Sobolev space provided the norm
and semi-norm $\|v\|_{W^{m,p}(D)}=\{\sum_{|\alpha|\leq m}\|D^\alpha
v\|^p_{L^p(D)}\}^{1/p}$, $|v|_{W^{m,p}(D)}=\{\sum_{|\alpha|=
m}\|D^\alpha v\|^p_{L^p(D)}\}^{1/p}$. If $p=2$ we usually write
$H^m(D)=W^{m,2}(D)$ and $H_0^m(D)=W_0^{m,2}(D)$,
$\|v\|_{H^m(D)}=\|v\|_{W^{m,2}(D)}$ and $|v|_{H^m(D)}=|v|_{W^{m,2}(D)}$.
In addition, we need spaces of vector values functions such as
$L^2(0,T;H^m(\Omega))$ and $C(0,T;H^m(\Omega))$ with the norms
\begin{align*}
\|\psi\|_{L^2(0,T;H^m(\Omega))}=\Big(\int_0^{T} \|\psi(t)\|_{H^m(\Omega)}^2\;dt\Big)^{1/2},\quad
\|\psi\|_{C(0,T;H^m(\Omega))}=\max_{0\leq t\leq T}\|\psi(t)\|_{H^m(\Omega)}.
\end{align*}

Integration by parts yields the following weak formulation:
Find $(L,\bm{u},p)\in L^{3/2}(\Omega)^{2\times 2}\times W_0^{1,3}(\Omega)^2\times L^{3/2}(\Omega)$
such that
\begin{equation}\label{eq:continuous}
\begin{alignedat}{2}
  (L,G)-\sqrt{\epsilon}(\nabla \bm{u},G)&=0\quad&&\forall G\in L^{3/2}(\Omega)^{2\times 2},\\
  (\bm{u}_t,\bm{v})+\sqrt{\epsilon}(L,\nabla \bm{v})
  +(\alpha\bm{u},\bm{v})+(\beta|\bm{u}|\bm{u},\bm{v})-(p,\text{div}\; \bm{v}) &=  (\bm{f},\bm{v})
  \quad&&\forall \bm{v}\in W_0^{1,3}(\Omega)^2,\\
  (\text{div}\;\bm{u},q)&=0\quad&&\forall q\in L^{3/2}(\Omega),
\end{alignedat}
\end{equation}
where
\begin{align*}
W^{1,3}_0(\Omega)^2=\{\bm{v}\in W^{1,3}(\Omega)^2,\bm{v}=\bm{0}\;\mbox{on}\;\partial \Omega\}.
\end{align*}
The bilinear forms given above satisfy the following inf-sup condition:
\begin{align}
\inf_{\bm{v}\in W^{1,3}_0(\Omega)^2}\frac{(\nabla \bm{v},G)}{\|G\|_{L^{3/2}(\Omega)}
\|\bm{v}\|_{W^{1,3}(\Omega)}}\geq C
\label{eq:inf-sup1}
\end{align}
and (cf. \cite{Amrouche94})
\begin{align}
\inf_{q\in L^2(\Omega)}\sup_{\bm{v}\in W_0^{1,3}(\Omega)^2}
\frac{(\nabla \cdot \bm{v},q)_{\Omega}}{\|q\|_{L^{3/2}(\Omega)}\|\bm{v}\|_{W^{1,3}(\Omega)}}\geq C.\label{eq:in-supb}
\end{align}

To ease later analysis, we define for any $\bm{v}\in L^3(\Omega)^2$
\begin{align}
\mathcal{N}(\bm{v})=\alpha\bm{v}+\beta|\bm{v}|\bm{v}.\label{eq:N}
\end{align}
We describe some properties for $\mathcal{N}$, which will play an important role for later analysis.
One can refer to \cite{Girault08} for more details. First, we have
\begin{align}
|\mathcal{N}(\bm{v})-\mathcal{N}(\bm{w})|\leq \alpha_{\text{max}} |\bm{v}-\bm{w}|+
\beta_{\text{max}}|\bm{v}-\bm{w}|(|\bm{v}|+|\bm{w}|), \quad \forall \bm{v},\bm{w}\in L^3(\Omega)^2.\label{eq:property1}
\end{align}
%
\begin{lemma}\label{lemma:monotone}
For fixed $\bm{u}_\ell\in L^3(\Omega)^2$, the mapping
$\bm{u}\rightarrow \mathcal{N}(\bm{u}+\bm{u}_\ell)$ defined by \eqref{eq:N} is monotone
 from $L^3(\Omega)^2$ into $L^{3/2}(\Omega)^2$:
\begin{align}
\forall \bm{u},\bm{v}\in L^3(\Omega)^2, \int_{\Omega}
(\mathcal{N}(\bm{u}+\bm{u}_\ell)-\mathcal{N}(\bm{v}+\bm{u}_\ell))\cdot (\bm{u}-\bm{v})\;dx\geq
 \alpha_{\textnormal{min}}\|\bm{u}-\bm{v}\|_{L^2(\Omega)}^2\label{eq:property2}.
\end{align}

\end{lemma}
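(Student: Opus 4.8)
The plan is to pass from the integral inequality \eqref{eq:property2} to a pointwise inequality on $\Omega$, and then treat the two constituents of $\mathcal{N}$ in \eqref{eq:N} separately. First I would set $\bm{a}:=\bm{u}+\bm{u}_\ell$ and $\bm{b}:=\bm{v}+\bm{u}_\ell$, so that $\bm{a}-\bm{b}=\bm{u}-\bm{v}$, and expand
\begin{align*}
\big(\mathcal{N}(\bm{a})-\mathcal{N}(\bm{b})\big)\cdot(\bm{a}-\bm{b})
=\alpha\,|\bm{a}-\bm{b}|^2+\beta\,\big(|\bm{a}|\bm{a}-|\bm{b}|\bm{b}\big)\cdot(\bm{a}-\bm{b}).
\end{align*}
Since $\alpha\ge\alpha_{\textnormal{min}}>0$ pointwise, the first term is bounded below by $\alpha_{\textnormal{min}}|\bm{u}-\bm{v}|^2$ a.e., so it remains only to show that the cubic term is nonnegative almost everywhere.

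For the cubic term I would expand the dot product and apply the Cauchy-Schwarz inequality $\bm{a}\cdot\bm{b}\le|\bm{a}|\,|\bm{b}|$:
\begin{align*}
\big(|\bm{a}|\bm{a}-|\bm{b}|\bm{b}\big)\cdot(\bm{a}-\bm{b})
&=|\bm{a}|^3+|\bm{b}|^3-\big(|\bm{a}|+|\bm{b}|\big)\,\bm{a}\cdot\bm{b}\\
&\ge|\bm{a}|^3+|\bm{b}|^3-\big(|\bm{a}|+|\bm{b}|\big)|\bm{a}|\,|\bm{b}|
=\big(|\bm{a}|-|\bm{b}|\big)^2\big(|\bm{a}|+|\bm{b}|\big)\ge0.
\end{align*}
Equivalently, $\bm{w}\mapsto|\bm{w}|\bm{w}$ is the gradient of the convex map $\bm{w}\mapsto\tfrac13|\bm{w}|^3$, hence monotone. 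Since $\beta>0$, combining this with the previous step yields the pointwise bound $\big(\mathcal{N}(\bm{a})-\mathcal{N}(\bm{b})\big)\cdot(\bm{a}-\bm{b})\ge\alpha_{\textnormal{min}}|\bm{u}-\bm{v}|^2$ a.e. in $\Omega$, and integrating over $\Omega$ gives \eqref{eq:property2}.

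Finally, I would record the mapping property claimed in the statement: for $\bm{w}\in L^3(\Omega)^2$ one has $\big||\bm{w}|\bm{w}\big|=|\bm{w}|^2\in L^{3/2}(\Omega)$ and $\alpha\bm{w}\in L^3(\Omega)^2\hookrightarrow L^{3/2}(\Omega)^2$ on the bounded domain $\Omega$, so $\mathcal{N}(\bm{w}+\bm{u}_\ell)\in L^{3/2}(\Omega)^2$ and every integral above is finite by H\"older's inequality with exponents $3/2$ and $3$. I do not anticipate any genuine obstacle here: the only nonroutine ingredient is the elementary pointwise monotonicity of $\bm{w}\mapsto|\bm{w}|\bm{w}$, and the fixed shift $\bm{u}_\ell$ enters only through the harmless change of variables to $\bm{a},\bm{b}$.
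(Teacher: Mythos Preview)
Your argument is correct: the pointwise decomposition into the $\alpha$-term and the $\beta$-term, together with the elementary monotonicity of $\bm{w}\mapsto|\bm{w}|\bm{w}$ (your Cauchy--Schwarz computation, or equivalently the convexity of $\tfrac13|\bm{w}|^3$), yields the inequality, and the H\"older argument for the $L^{3/2}$ mapping property is fine. Note that the paper does not actually supply a proof of this lemma; it simply states it and refers the reader to \cite{Girault08}, so there is no ``paper's own proof'' to compare against---your write-up is essentially the standard argument one finds in that reference.
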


\begin{lemma}\label{lemma:coercivity}
For fixed $\bm{u}_l\in L^3(\Omega)^2$, the mapping $\bm{u}\rightarrow \mathcal{N}(\bm{u}+\bm{u}_l)$
 defined by \eqref{eq:N} is coercive in $L^3(\Omega)^2$
\begin{align*}
\lim_{\|\bm{u}\|_{L^3(\Omega)}\rightarrow \infty}
\Big(\frac{1}{\|\bm{u}\|_{L^3(\Omega)}}\int_{\Omega}
\mathcal{N}(\bm{u}+\bm{u}_l)\cdot\bm{u}\;dx\Big)
=\infty.
\end{align*}

\end{lemma}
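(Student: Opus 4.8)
The plan is to reduce the statement to an explicit computation after shifting by $\bm{u}_l$. Set $\bm{w}:=\bm{u}+\bm{u}_l$, so that $\bm{u}=\bm{w}-\bm{u}_l$, and split, using \eqref{eq:N},
\[
\int_{\Omega}\mathcal{N}(\bm{u}+\bm{u}_l)\cdot\bm{u}\;dx=\int_{\Omega}\mathcal{N}(\bm{w})\cdot\bm{w}\;dx-\int_{\Omega}\mathcal{N}(\bm{w})\cdot\bm{u}_l\;dx .
\]
Since $\mathcal{N}(\bm{w})\cdot\bm{w}=\alpha|\bm{w}|^2+\beta|\bm{w}|^3$ pointwise, the first integral equals $\alpha\|\bm{w}\|_{L^2(\Omega)}^2+\beta\|\bm{w}\|_{L^3(\Omega)}^3$, which is bounded below by $\alpha_{\textnormal{min}}\|\bm{w}\|_{L^2(\Omega)}^2+\beta\|\bm{w}\|_{L^3(\Omega)}^3$.

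For the second integral I would use the pointwise bound $|\mathcal{N}(\bm{w})|\le\alpha_{\max}|\bm{w}|+\beta_{\max}|\bm{w}|^2$ a.e.\ (this is \eqref{eq:property1} with the second argument set to $\bm{0}$, since $\mathcal{N}(\bm{0})=\bm{0}$), and then Hölder's inequality — Cauchy--Schwarz for the linear part and the exponent pair $(3/2,3)$ for the quadratic part — to obtain
\[
\Big|\int_{\Omega}\mathcal{N}(\bm{w})\cdot\bm{u}_l\;dx\Big|\le\alpha_{\max}\|\bm{w}\|_{L^2(\Omega)}\|\bm{u}_l\|_{L^2(\Omega)}+\beta_{\max}\|\bm{w}\|_{L^3(\Omega)}^2\|\bm{u}_l\|_{L^3(\Omega)} ,
\]
where $\bm{u}_l\in L^2(\Omega)^2$ as well because $\Omega$ is bounded. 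Combining the two bounds and applying Young's inequality to the two cross terms — absorbing $\alpha_{\max}\|\bm{w}\|_{L^2}\|\bm{u}_l\|_{L^2}$ into $\tfrac12\alpha_{\textnormal{min}}\|\bm{w}\|_{L^2(\Omega)}^2$ plus a constant depending only on $\bm{u}_l$, and, once $\|\bm{w}\|_{L^3(\Omega)}\ge 2\beta_{\max}\beta^{-1}\|\bm{u}_l\|_{L^3(\Omega)}$, absorbing $\beta_{\max}\|\bm{w}\|_{L^3}^2\|\bm{u}_l\|_{L^3}$ into $\tfrac12\beta\|\bm{w}\|_{L^3(\Omega)}^3$ — yields a bound of the form
\[
\int_{\Omega}\mathcal{N}(\bm{u}+\bm{u}_l)\cdot\bm{u}\;dx\ge\tfrac12\beta\|\bm{w}\|_{L^3(\Omega)}^3-C(\bm{u}_l)
\]
valid whenever $\|\bm{w}\|_{L^3(\Omega)}$ is large enough.

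To conclude, I would insert $\|\bm{w}\|_{L^3(\Omega)}\ge\|\bm{u}\|_{L^3(\Omega)}-\|\bm{u}_l\|_{L^3(\Omega)}$ and divide by $\|\bm{u}\|_{L^3(\Omega)}$: the right-hand side then behaves like $\tfrac12\beta\,\|\bm{u}\|_{L^3(\Omega)}^{-1}\big(\|\bm{u}\|_{L^3(\Omega)}-\|\bm{u}_l\|_{L^3(\Omega)}\big)^3-C(\bm{u}_l)\|\bm{u}\|_{L^3(\Omega)}^{-1}$, which tends to $+\infty$ as $\|\bm{u}\|_{L^3(\Omega)}\to\infty$, giving the claim. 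There is no genuine difficulty here beyond bookkeeping; the only point to watch is that the cubic Forchheimer term really is the dominant one, i.e.\ that the Darcy/$L^2$ contribution and both cross terms are at worst quadratic in $\|\bm{w}\|_{L^3(\Omega)}$, so that after division by $\|\bm{u}\|_{L^3(\Omega)}$ they remain bounded or grow only linearly while the retained term grows quadratically.
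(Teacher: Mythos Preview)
Your argument is sound and is precisely the standard one (as in the reference \cite{Girault08} that the paper points to). Note, however, that the paper itself does \emph{not} supply a proof of this lemma: it is stated without proof, with the preceding sentence ``One can refer to \cite{Girault08} for more details'' covering Lemmas~\ref{lemma:monotone}--\ref{lemma:hemicontinuity}. So there is no in-paper proof to compare against; your write-up simply fills in what the paper delegates to the literature. One small caveat: you treat $\beta$ as a positive constant when bounding $\int_\Omega\beta|\bm{w}|^3\,dx$ from below by $\beta\|\bm{w}\|_{L^3(\Omega)}^3$, whereas the paper's hypotheses allow $\alpha,\beta$ to vary in $\bm{x}$ (only $\beta\le\beta_{\max}$ is assumed, no $\beta_{\min}>0$). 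If $\beta$ is indeed constant this is harmless; if not, one should either assume a positive lower bound $\beta_{\min}$ or phrase the lower bound as $\int_\Omega\beta|\bm{w}|^3\,dx$ and carry that integral through the absorption step, which works just as well since the cross term $\beta_{\max}\|\bm{w}\|_{L^3}^2\|\bm{u}_l\|_{L^3}$ is still of lower order.
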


\begin{lemma}\label{lemma:hemicontinuity}
The mapping $\mathcal{N}$ is hemi-continuous in $L^3(\Omega)^2$; for fixed $\bm{u}_l,\bm{u}$
 and $\bm{v}$ in $L^3(\Omega)^2$, the mapping
\begin{align*}
\gamma\rightarrow \int_{\Omega} \mathcal{N}(\bm{u}_l+\bm{u}+\gamma\bm{v})\cdot\bm{v}\;dx
\end{align*}
is continuous from $\mathbb{R}$ into $\mathbb{R}$.

\end{lemma}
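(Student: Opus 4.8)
The plan is to show that the real-valued function
\[
\phi(\gamma) := \int_\Omega \mathcal{N}(\bm{u}_l + \bm{u} + \gamma\bm{v})\cdot\bm{v}\;dx
\]
is in fact locally Lipschitz on $\mathbb{R}$, which in particular yields the asserted continuity. Fix $\gamma_0\in\mathbb{R}$, pick $M>0$ with $|\gamma_0|\le M$, and restrict attention to $|\gamma|\le M$. Write $\bm{w}_\gamma := \bm{u}_l + \bm{u} + \gamma\bm{v}$, so that $\bm{w}_\gamma - \bm{w}_{\gamma_0} = (\gamma-\gamma_0)\bm{v}$.

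First I would apply the Lipschitz-type bound \eqref{eq:property1} to the pair $\bm{w}_\gamma,\bm{w}_{\gamma_0}$, obtaining the pointwise estimate
\[
\big|\mathcal{N}(\bm{w}_\gamma) - \mathcal{N}(\bm{w}_{\gamma_0})\big| \le |\gamma-\gamma_0|\,|\bm{v}|\,\big(\alpha_{\text{max}} + \beta_{\text{max}}(|\bm{w}_\gamma| + |\bm{w}_{\gamma_0}|)\big)\quad\text{a.e. in }\Omega.
\]
Using $\phi(\gamma)-\phi(\gamma_0) = \int_\Omega(\mathcal{N}(\bm{w}_\gamma)-\mathcal{N}(\bm{w}_{\gamma_0}))\cdot\bm{v}\;dx$ together with the elementary bound $|\bm{a}\cdot\bm{v}|\le|\bm{a}|\,|\bm{v}|$, I would then arrive at
\[
|\phi(\gamma) - \phi(\gamma_0)| \le |\gamma-\gamma_0|\Big(\alpha_{\text{max}}\int_\Omega|\bm{v}|^2\;dx + \beta_{\text{max}}\int_\Omega|\bm{v}|^2\big(|\bm{w}_\gamma| + |\bm{w}_{\gamma_0}|\big)\;dx\Big).
\]

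The only point requiring care is that the bracketed quantity is finite and bounded uniformly for $|\gamma|\le M$. The first integral is immediate since $\bm{v}\in L^3(\Omega)^2\subset L^2(\Omega)^2$ on the bounded domain $\Omega$. For the second I would use the generalized H\"older inequality with exponents $3/2$ and $3$: since $\bm{v}\in L^3(\Omega)^2$ we have $|\bm{v}|^2\in L^{3/2}(\Omega)$, while $|\bm{w}_\gamma|\le|\bm{u}_l|+|\bm{u}|+M|\bm{v}|\in L^3(\Omega)$ with $L^3$-norm bounded independently of $\gamma$ (and likewise for $\bm{w}_{\gamma_0}$); hence
\[
\int_\Omega|\bm{v}|^2\big(|\bm{w}_\gamma| + |\bm{w}_{\gamma_0}|\big)\;dx \le \|\bm{v}\|_{L^3(\Omega)}^2\big(\|\bm{w}_\gamma\|_{L^3(\Omega)} + \|\bm{w}_{\gamma_0}\|_{L^3(\Omega)}\big) \le C_M,
\]
with $C_M$ depending only on $M$, $\|\bm{u}_l\|_{L^3(\Omega)}$, $\|\bm{u}\|_{L^3(\Omega)}$ and $\|\bm{v}\|_{L^3(\Omega)}$. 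Combining the displays gives $|\phi(\gamma)-\phi(\gamma_0)|\le C_M'\,|\gamma-\gamma_0|$ for all $|\gamma|\le M$; since $\gamma_0$ was arbitrary, $\phi$ is continuous on $\mathbb{R}$.

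The argument is essentially routine; the step I would be most careful about is the integrability bookkeeping — checking that the cubic growth of the Forchheimer nonlinearity is matched exactly by the $L^3$-integrability hypothesis through the H\"older pairing $\tfrac23+\tfrac13=1$. An alternative, equally short route avoids \eqref{eq:property1} altogether: write $\mathcal{N}(\bm{w}_\gamma)\cdot\bm{v}=\alpha\,\bm{w}_\gamma\cdot\bm{v}+\beta|\bm{w}_\gamma|\,\bm{w}_\gamma\cdot\bm{v}$, observe that the integrand converges pointwise a.e.\ as $\gamma\to\gamma_0$ by continuity of $\bm{z}\mapsto|\bm{z}|\bm{z}$, is dominated by $(|\bm{u}_l|+|\bm{u}|+M|\bm{v}|)^2|\bm{v}|\in L^1(\Omega)$, and conclude by Lebesgue's dominated convergence theorem.
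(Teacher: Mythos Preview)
Your argument is correct. The paper does not supply its own proof of this lemma; it simply states the result and refers the reader to \cite{Girault08} for the details. Your route via the pointwise Lipschitz estimate \eqref{eq:property1} combined with the H\"older pairing $L^{3/2}\times L^3$ is exactly the standard way to handle the Forchheimer nonlinearity in this setting, and it yields the stronger conclusion of local Lipschitz continuity rather than mere continuity. The alternative dominated-convergence argument you sketch is also valid; just note that the dominating function should in principle include a term controlling the linear part $\alpha\,\bm{w}_\gamma\cdot\bm{v}$ as well (e.g.\ $\alpha_{\max}(|\bm{u}_l|+|\bm{u}|+M|\bm{v}|)|\bm{v}|$), though this is of course also in $L^1(\Omega)$ and is harmless.
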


%

%
%

\begin{theorem}\label{thm:continuous}
There exists a unique solution to \eqref{eq:continuous}. In addition, the following estimate holds
\begin{align}
\int_0^t\Big(\|L\|_{L^2(\Omega)}^2+
\frac{\alpha_{\textnormal{min}}}{2}\|\bm{u}\|_{L^2(\Omega)}^2\Big)\;ds+\frac{1}{2}\|\bm{u}(t)\|_{L^2(\Omega)}^2
\leq \int_0^t\frac{1}{2\alpha_{\textnormal{min}}}\|\bm{f}\|_{L^2(\Omega)}^2\;ds.\label{eq:stability}
\end{align}

\end{theorem}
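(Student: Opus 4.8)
The plan is to prove existence and uniqueness separately, and then derive the stability bound by an energy argument. For \textbf{existence}, I would first eliminate $L$ using the first equation of \eqref{eq:continuous}, namely $L = \sqrt{\epsilon}\nabla\bm{u}$, reducing the system to a velocity-pressure formulation with the $\epsilon$-scaled diffusion term $\epsilon(\nabla\bm{u},\nabla\bm{v})$ plus the nonlinear term $(\mathcal{N}(\bm{u}),\bm{v})$ and the pressure coupling. One then works on the divergence-free subspace $\bm{V} = \{\bm{v}\in W_0^{1,3}(\Omega)^2 : \operatorname{div}\bm{v} = 0\}$, so the pressure drops out, and recovers $p$ afterward via the inf-sup condition \eqref{eq:in-supb} and the theory of saddle-point problems applied to the residual functional. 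On $\bm{V}$ the problem becomes: find $\bm{u}(t)$ with $\bm{u}_t \in \bm{V}'$ such that $(\bm{u}_t,\bm{v}) + \epsilon(\nabla\bm{u},\nabla\bm{v}) + (\mathcal{N}(\bm{u}),\bm{v}) = (\bm{f},\bm{v})$ for all $\bm{v}\in\bm{V}$. The spatial operator is monotone, coercive, and hemi-continuous by Lemmas~\ref{lemma:monotone}, \ref{lemma:coercivity}, and \ref{lemma:hemicontinuity} (with $\bm{u}_\ell = \bm{0}$, augmented by the coercive and continuous diffusion part), so I would invoke the standard theory of monotone parabolic operators (a Galerkin/Faedo method together with the Browder–Minty-type argument, as in Lions or in \cite{Girault08}): construct finite-dimensional approximations in a basis of $\bm{V}$, obtain a priori bounds from the energy estimate below, extract weakly/weakly-* convergent subsequences, and pass to the limit using monotonicity to identify the nonlinear limit.

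For the \textbf{energy estimate} \eqref{eq:stability}, which also furnishes the a priori bounds needed in the Galerkin step, I would take $\bm{v} = \bm{u}(s)$ as the test function (legitimate since $\bm{u}\in\bm{V}$) and $G = L(s)$ in the first equation. The first equation gives $(L,L) = \sqrt{\epsilon}(\nabla\bm{u},L)$, so $\sqrt{\epsilon}(L,\nabla\bm{u}) = \|L\|_{L^2(\Omega)}^2$. The momentum equation then reads
\begin{equation*}
\frac{1}{2}\frac{d}{ds}\|\bm{u}(s)\|_{L^2(\Omega)}^2 + \|L(s)\|_{L^2(\Omega)}^2 + (\alpha\bm{u},\bm{u}) + (\beta|\bm{u}|\bm{u},\bm{u}) = (\bm{f},\bm{u}),
\end{equation*}
where the pressure term vanishes because $\operatorname{div}\bm{u} = 0$. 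Since $(\beta|\bm{u}|\bm{u},\bm{u}) = \beta\||\bm{u}|^{1/2}\bm{u}\|_{L^2(\Omega)}^2 \geq 0$ and $(\alpha\bm{u},\bm{u}) \geq \alpha_{\text{min}}\|\bm{u}\|_{L^2(\Omega)}^2$, I would drop the Forchheimer term, split $\alpha_{\text{min}}\|\bm{u}\|^2 = \tfrac{\alpha_{\text{min}}}{2}\|\bm{u}\|^2 + \tfrac{\alpha_{\text{min}}}{2}\|\bm{u}\|^2$, and bound the right-hand side by Young's inequality $(\bm{f},\bm{u}) \leq \tfrac{1}{2\alpha_{\text{min}}}\|\bm{f}\|_{L^2(\Omega)}^2 + \tfrac{\alpha_{\text{min}}}{2}\|\bm{u}\|_{L^2(\Omega)}^2$, absorbing one half into the left. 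Integrating from $0$ to $t$ and using $\bm{u}(0) = \bm{0}$ yields exactly \eqref{eq:stability}.

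For \textbf{uniqueness}, suppose $(L_1,\bm{u}_1,p_1)$ and $(L_2,\bm{u}_2,p_2)$ are two solutions, subtract the equations, set $\bm{w} = \bm{u}_1 - \bm{u}_2$ (divergence-free, so the pressure difference is annihilated by the test function $\bm{w}$), and use $\bm{w}$ as test function. Eliminating $L_1 - L_2 = \sqrt{\epsilon}\nabla\bm{w}$ gives
\begin{equation*}
\frac{1}{2}\frac{d}{ds}\|\bm{w}(s)\|_{L^2(\Omega)}^2 + \epsilon\|\nabla\bm{w}\|_{L^2(\Omega)}^2 + \int_\Omega(\mathcal{N}(\bm{u}_1) - \mathcal{N}(\bm{u}_2))\cdot\bm{w}\,dx = 0,
\end{equation*}
and the monotonicity estimate \eqref{eq:property2} (with $\bm{u}_\ell = \bm{0}$) bounds the nonlinear term below by $\alpha_{\text{min}}\|\bm{w}\|_{L^2(\Omega)}^2 \geq 0$; dropping the nonnegative terms and integrating with $\bm{w}(0) = \bm{0}$ forces $\bm{w} \equiv 0$, hence $L_1 = L_2$, and then the inf-sup condition \eqref{eq:in-supb} gives $p_1 = p_2$. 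I expect the \textbf{main obstacle} to be the existence part — specifically making the Galerkin limit argument rigorous in the low-integrability setting $W_0^{1,3}$ / $L^{3/2}$ dictated by the cubic nonlinearity: one must carefully track that the a priori bounds control $\bm{u}$ in $L^3(0,T;L^3(\Omega)^2)$ (via the Forchheimer term, which is why it cannot simply be discarded at the Galerkin level) so that $|\bm{u}|\bm{u}$ is bounded in $L^{3/2}$, and then use the Minty monotonicity trick to identify its weak limit with $\mathcal{N}(\bm{u})$, together with recovering the pressure in the correct space and verifying the time regularity needed to make $\bm{u}(0) = \bm{0}$ meaningful.
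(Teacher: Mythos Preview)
Your proposal is correct and follows essentially the same approach as the paper: the paper likewise invokes monotonicity, coercivity, and hemi-continuity of $\mathcal{N}$ (Lemmas~\ref{lemma:monotone}--\ref{lemma:hemicontinuity}) together with the inf-sup conditions for existence (citing \cite{Showalter97,Caucao19} rather than spelling out the Galerkin/Minty argument you outline), proves uniqueness by testing the difference of two solutions with $(L^1-L^2,\bm{u}^1-\bm{u}^2,p^1-p^2)$ and using \eqref{eq:property2}, and derives \eqref{eq:stability} by taking $(G,\bm{v},q)=(L,\bm{u},p)$, bounding $(\mathcal{N}(\bm{u}),\bm{u})\geq\alpha_{\text{min}}\|\bm{u}\|_{L^2(\Omega)}^2$, and applying Young's inequality exactly as you do. The only cosmetic difference is that the paper keeps the three-field formulation throughout rather than eliminating $L$ and passing to the divergence-free subspace, but the computations are identical.
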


\begin{proof}

Since $\mathcal{N}$ is monotone, coercive and hemi-continuous
(cf. Lemmas~\ref{lemma:monotone}-\ref{lemma:hemicontinuity}), in addition, the
inf-sup condition \eqref{eq:inf-sup1} and \eqref{eq:in-supb} hold, we can follow
\cite{Showalter97,Caucao19} to show that there exists a solution
to \eqref{eq:continuous} and we omit the proof for simplicity.
Next, we show that the solution is unique.
Assume that the solution of \eqref{eq:continuous} is not unique.
Let $(L^{i},\bm{u}^i,p^i)$ with $i\in\{1,2\}$ be two solutions corresponding to the same data.
Then, taking \eqref{eq:continuous} with $(G,\bm{v},q)=(L^1-L^2,\bm{u}^1-\bm{u}^2,p^1-p^2)$
and summing up the resulting equations,
we can infer that
\begin{align*}
\|L^1-L^2\|_{L^2(\Omega)}^2+\frac{1}{2}\partial_t\|\bm{u}^1-\bm{u}^2\|_{L^2(\Omega)}^2
+(\mathcal{N}(\bm{u}^1)-\mathcal{N}(\bm{u}^2),\bm{u}^1-\bm{u}^2)=0,
\end{align*}
which yields
\begin{align*}
\|L^1-L^2\|_{L^2(\Omega)}^2+\frac{1}{2}\partial_t\|\bm{u}^1-\bm{u}^2\|_{L^2(\Omega)}^2
+\|\bm{u}^1-\bm{u}^2\|_{L^2(\Omega)}^2\leq 0.
\end{align*}
Integrating in time from $0$ to $t\in [0,T]$ and using $\bm{u}^1(0)=\bm{u}^2(0)$, we obtain
\begin{align*}
\frac{1}{2}\|\bm{u}^1(t)-\bm{u}^2(t)\|_{L^2(\Omega)}^2+\int_0^t \Big(\|L^1-L^2\|_{L^2(\Omega)}^2+
\|\bm{u}^1-\bm{u}^2\|_{L^2(\Omega)}^2\Big) \leq 0.
\end{align*}
Therefore, we can infer that $\bm{u}^1(t)=\bm{u}^2(t)$ and $L^1(t)=L^2(t)$, which
 implies that \eqref{eq:continuous} has a unique solution.

Next, we will show the stability estimate \eqref{eq:stability}.
Taking $G=L$, $\bm{v}=\bm{u}$ and $q=p$ in \eqref{eq:continuous}, we can get
\begin{align*}
\|L\|_{L^2(\Omega)}^2+\frac{1}{2}\frac{d}{dt}\|\bm{u}\|_{L^2(\Omega)}^2+(\mathcal{N}(\bm{u}),\bm{u})=(\bm{f},\bm{u})
\leq \frac{1}{2\alpha_{\textnormal{min}}}\|\bm{f}\|_{L^2(\Omega)}^2
+\frac{\alpha_{\textnormal{min}}}{2}\|\bm{u}\|_{L^2(\Omega)}^2.
\end{align*}
It follows from the definition of $\mathcal{N}$ that
\begin{align*}
(\mathcal{N}(\bm{u}),\bm{u})\geq \alpha_{\text{min}} \|\bm{u}\|_{L^2(\Omega)}^2,
\end{align*}
thereby we can infer that
\begin{align*}
\|L\|_{L^2(\Omega)}^2+\frac{1}{2}\frac{d}{dt}\|\bm{u}\|_{L^2(\Omega)}^2+
\alpha_{\text{min}}\|\bm{u}\|_{L^2(\Omega)}^2
\leq \frac{1}{2\alpha_{\text{min}}}\|\bm{f}\|_{L^2(\Omega)}^2
+\frac{\alpha_{\text{min}}}{2}\|\bm{u}\|_{L^2(\Omega)}^2,
\end{align*}
which yields
\begin{align*}
\|L\|_{L^2(\Omega)}^2+\frac{1}{2}\frac{d}{dt}\|\bm{u}\|_{L^2(\Omega)}^2
+\frac{\alpha_{\text{min}}}{2}\|\bm{u}\|_{L^2(\Omega)}^2
\leq  \frac{1}{2\alpha_{\text{min}}}\|\bm{f}\|_{L^2(\Omega)}^2.
\end{align*}
Integrating over time and using the fact that $\bm{u}(0)=\bm{0}$ imply
\begin{align*}
\int_0^t\Big(\|L\|_{L^2(\Omega)}^2+\frac{\alpha_{\text{min}}}{2}\|\bm{u}\|_{L^2(\Omega)}^2\Big)\;ds
+\frac{1}{2}\|\bm{u}(t)\|_{L^2(\Omega)}^2
\leq \int_0^t\frac{1}{2\alpha_{\text{min}}}\|\bm{f}\|_{L^2(\Omega)}^2\;ds.
\end{align*}
Therefore, the proof is completed.

\end{proof}

\section{Description of staggered DG method}\label{sec:sdg}

In this section, we introduce the discrete formulation for the unsteady
Darcy-Forchheimer-Brinkman problem \eqref{sys0}. To this end, we first introduce the
construction of our staggered DG spaces, in line with this we then present the construction of staggered DG method.
 To begin, we construct three meshes: the primal mesh $\mathcal{T}_{u}$, the dual mesh $\mathcal{T}_{d}$, and the primal simplicial submeshes $\mathcal{T}_h$. For a polygonal domain $\Omega$, consider a general mesh $\mathcal{T}_{u}$ (of $\Omega$) that consists of nonempty connected close disjoint subsets of $\Omega$:
\begin{align*}
\bar{\Omega}=\bigcup_{E\in \mathcal{T}_{u}}E.
\end{align*}
We let $\mathcal{F}_{u}$ be the set of all primal edges in this partition and $\mathcal{F}_{u}^{0}$ be the
subset of all interior edges, that is, the set of edges in $\mathcal{F}_{u}$ that do not lie on $\partial\Omega$. We construct the primal submeshes $\mathcal{T}_h$ as a triangular subgrid of the primal grid: for an element $E\in \mathcal{T}_{u}$, elements of $\mathcal{T}_h$ are obtained by connecting the interior point $\nu$ to all vertices of $\mathcal{T}_{u}$ (see Figure~\ref{grid}). We use $\mathcal{F}_{p}$ to denote the set of all the dual edges generated by this subdivision process. For each triangle
$\tau\in \mathcal{T}_h$, we let $h_\tau$ be the diameter of
$\tau$, $h_e$ be the length of edge $e\subset\partial \tau$, and $h=\max\{h_\tau, \tau\in \mathcal{T}_h\}$.
In addition, we define $\mathcal{F}:=\mathcal{F}_{u}\cup \mathcal{F}_{p}$ and $\mathcal{F}^{0}:=\mathcal{F}_{u}^{0}\cup \mathcal{F}_{p}$. We rename the primal element by $S(\nu)$, which is assumed to be star-shaped with respect to a ball of radius $\rho_B h_{S(\nu)}$, where $\rho_B$ is a positive constant. In addition, we assume that for every edge $e\in \partial S(\nu)$, it satisfies $h_e\geq \rho_Eh_{S(\nu)}$, where $\rho_E$ is a positive constant.
 More discussions about mesh regularity assumptions for general meshes can be referred to \cite{Beir13}.
The construction for general meshes is illustrated in Figure~\ref{grid},
where the black solid lines are edges in $\mathcal{F}_{u}$
and the red dotted lines are edges in $\mathcal{F}_{p}$.

Finally, we construct the dual mesh. For each interior edge $e\in \mathcal{F}_{u}^0$, we use $D(e)$ to denote the dual mesh, which is the union of the two triangles in $\mathcal{T}_h$ sharing the edge $e$,
and for each boundary edge $e\in\mathcal{F}_{u}\backslash\mathcal{F}_{u}^0$, we use $D(e)$ to denote the triangle in $\mathcal{T}_h$ having the edge $e$,
see Figure~\ref{grid}.

For each edge $e$, we define
a unit normal vector $\bm{n}_{e}$ as follows: If $e\in \mathcal{F}\setminus \mathcal{F}^{0}$, then
$\bm{n}_{e}$ is the unit normal vector of $e$ pointing towards the outside of $\Omega$. If $e\in \mathcal{F}^{0}$, an
interior edge, we then fix $\bm{n}_{e}$ as one of the two possible unit normal vectors on $e$.
When there is no ambiguity,
we use $\bm{n}$ instead of $\bm{n}_{e}$ to simplify the notation. In addition, we use $\bm{t}$ to denote the corresponding unit tangent vector. We also introduce some notations that will be employed throughout this paper. Let $k\geq 0$ be the order of approximation. For every $\tau
\in \mathcal{T}_{h}$ and $e\in\mathcal{F}$,
we define $P^{k}(\tau)$ and $P^{k}(e)$ as the spaces of polynomials of degree less than or equal to $k$ on $\tau$ and $e$, respectively. In the following, we use $\nabla_h$ and $\text{div}_h$ to denote the element-wise gradient and divergence operators, respectively.


We now define jump terms which will be used throughout the paper.
For each triangle $\tau_i$ in $\mathcal{T}_h$ such that $e \subset \partial \tau_i$, we let $\bm{n}_i$ be the outward unit normal vector on $e\subset \partial \tau_i$. The sign $\delta_i$ of $\bm{n}_i$ with respect to $\bs{n}$ on $e$ is then given by
\begin{equation*}
\delta_i = \bs{n}_i \cdot \bm{n} = \begin{cases}
1 & \text{ if } \bs{n}_i = \bm{n} \text{ on } e,\\
-1 & \text{ if } \bs{n}_i = -\bm{n} \text{ on } e.
\end{cases}
\end{equation*}
For a double-valued scalar quantity $\phi$, let $\phi_i=\phi\mid_{\tau_i}$.The jump $[\phi]$ across an edge $e \in \mathcal{F}^0$ can then be defined as:
\begin{equation*}
[\phi] = \delta_1 \phi_1 + \delta_2 \phi_2,
\end{equation*}
where $\tau_1$ and $\tau_2$ are the two triangles sharing the common edge $e$. For $e\in \mathcal{F}\backslash\mathcal{F}^0$, we let $[\phi] =\phi_1$.
Similarly, for a vector quantity $\bs{\phi}$ and a matrix quantity $\matr{\Phi}$, we let $\bs{\phi}_i=\bs{\phi}\mid_{\tau_i}$ and $\matr{\Phi}_i=\matr{\Phi}\mid_{\tau_i}$, then the jumps $[\bs{\phi} \cdot \bs{n}]$ and $[\matr{\Phi} \bs{n}]$ across an edge $e \in \mathcal{F}^0$ are defined as:
\begin{equation}
\begin{split}
[\bs{\phi} \cdot \bs{n}] & = \delta_1 (\bs{\phi}_1 \cdot \bs{n}) + \delta_2 (\bs{\phi}_2 \cdot \bs{n}),\\
[\matr{\Phi} \bs{n}] & = \delta_1 (\matr{\Phi}_1 \bs{n}) + \delta_2 (\matr{\Phi}_2 \bs{n}).
\label{eq:jump2}
\end{split}
\end{equation}
In addition, for $e\in \mathcal{F}\backslash \mathcal{F}^0$, we define $[\bs{\phi} \cdot \bs{n}]=\bs{\phi}_1 \cdot \bs{n}$ and $[\matr{\Phi} \bs{n}]=\matr{\Phi}_1 \bs{n}$. In the sequel, we use $C$ to denote a positive constant
 which may have different values at different occurrences.

\begin{figure}[t]
\centering
\includegraphics[width=0.7\textwidth]{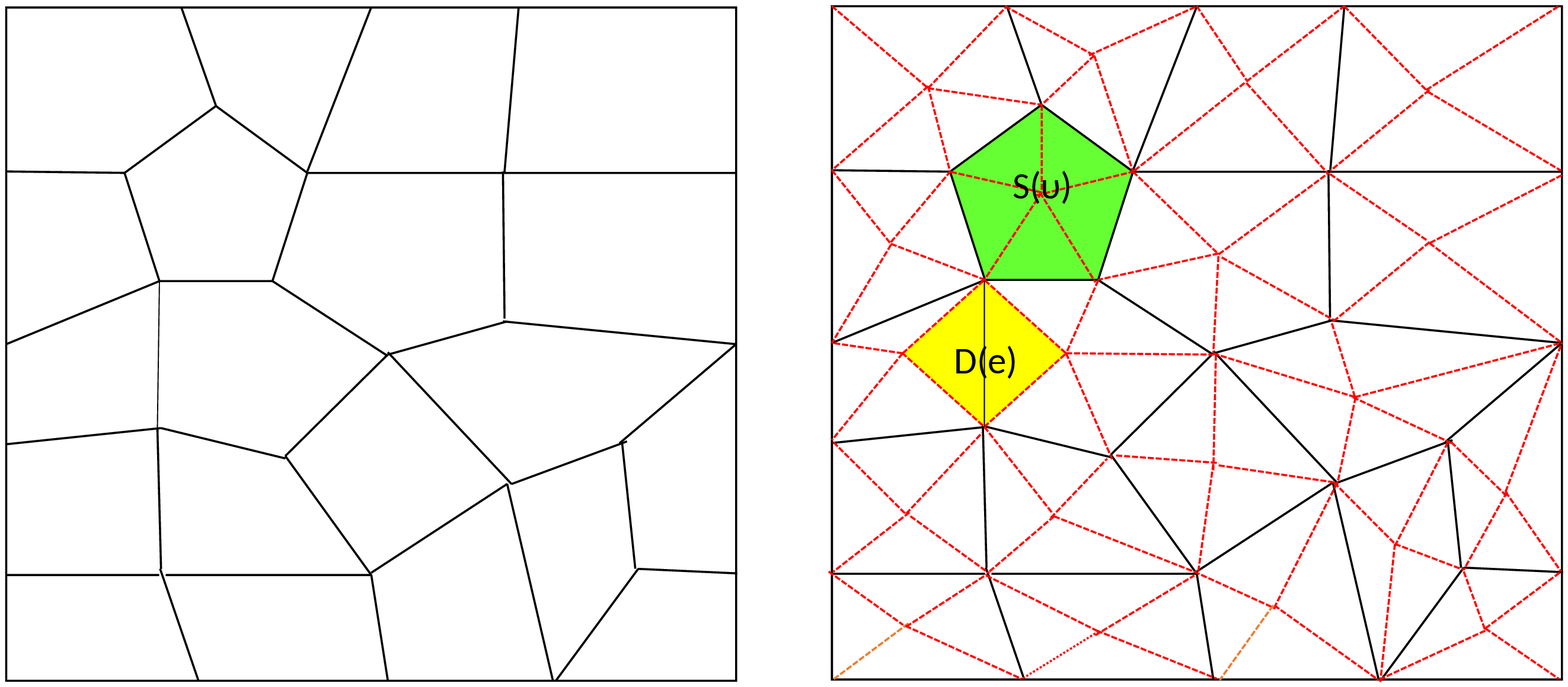}
\setlength{\abovecaptionskip}{-0.5cm}
\caption{Schematic of the primal mesh $S(\nu)$, the dual mesh $D(e)$ and the primal simplicial submeshes.}
\label{grid}
\end{figure}

Now we are ready to introduce the finite dimensional spaces for our staggered DG method
by following \cite{LinaChungLam20}.
We first define
the following finite element space for velocity:
\begin{equation*}
U^h=\{\bs{v} \: : \:  \bs{v} \vert_{\tau} \in P^k(\tau)^2;  \tau \in \mathcal{T}_h; \bs{v} \cdot \bs{n} \text{ is continuous over } e \in \mathcal{F}_p\}.
\end{equation*}
In this space, we define
\begin{align*}
\|\bm{v}\|_{Z_2}^3&=\|\nabla_h \bm{v}\|_{L^3(\Omega)}^3
+\sum_{e\in \mathcal{F}_u}h_e^{-2}\|[\bm{v}]\|_{L^3(e)}^3+
\sum_{e\in \mathcal{F}_p}h_e^{-2}\|[(\bm{v}\cdot \bm{t})\bm{t}]\|_{L^3(e)}^3.
\end{align*}
Next, we define the following finite element space for velocity gradient:
\begin{equation*}
W^h=\{\matr{G} \: : \:  \matr{G} \vert_{\tau} \in P^k(\tau)^{2\times 2};  \tau \in \mathcal{T}_h; \matr{G} \bs{n} \text{ is continuous over } e\in \mathcal{F}_u^0\}.
\end{equation*}
Then, we define the following locally $H^{1}(\Omega)$-conforming finite element space for pressure:
\begin{equation*}
P^h=\{q \: : \:  q \vert_{\tau} \in P^k(\tau);  \tau \in \mathcal{T}_h; q \text{ is continuous over } e\in \mathcal{F}_u^0;\int_{\Omega} q \;dx=0\},
\end{equation*}
which is equipped by
\begin{align*}
\|q\|_{3/2,h}^{3/2}&=\sum_{\tau\in \mathcal{T}_{h}}\int_{\tau} |\nabla q |^{3/2}\;dx
+\sum_{e\in \mathcal{F}_{p}}h_e^{-1/2}\int_e|[q]|^{3/2}\;ds.
\end{align*}
Finally, we define the following space which is employed to
enforce the weak continuity of the velocity gradient over the dual edge
\begin{equation*}
\widehat{U}^h=\{\widehat{\bs{v}} \: : \:  \widehat{\bs{v}} \vert _e \in P^k(e)^2; \widehat{\bs{v}} \cdot \bs{n}\mid_e = 0 \quad\forall e \in \mathcal{F}_p \}.
\end{equation*}

Following \cite{LinaChungLam20}, we can formulate our staggered DG formulation for the unsteady Darcy-Forchheimer-Brinkman problem \eqref{sys0}: Find $(L_h,\bm{u}_h,\widehat{\bm{u}}_h,p_h)\in W^h\times U^h\times \widehat{U}^h\times P^h$ such that
\begin{equation}
\begin{split}
(L_h,G)&=\sqrt{\epsilon}B_h^*(\bm{u}_h,G)+\sqrt{\epsilon}T_h^*(\widehat{\bm{u}}_h,G)\quad \forall G\in W^h,\\ (\partial_t \bm{u}_h,\bm{v})+\sqrt{\epsilon}B_h(L_h,\bm{v})+(\mathcal{N}(\bm{u}_h),\bm{v})
+b_h^*(p_h,\bm{v})&=(\bm{f},\bm{v})\quad \forall \bm{v}\in U^h,\\
-b_h(\bm{u}_h,q)&=0\quad \forall q\in P^h,\\
T_h(L_h,\widehat{\bm{v}})&=0\quad \forall \widehat{\bm{v}}\in \widehat{U}^h,
\end{split}
\label{eq:SDG-discrete}
\end{equation}
where the bilinear forms are defined by
\begin{align*}
B_h^*(\bm{v},G)&=-\int_\Omega \bm{v}\cdot\text{div}_h \matr{G}\;dx+\sum_{e\in \mathcal{F}_p}\int_e(\bm{v}\cdot \bm{n})\bm{n}\cdot [G\bm{n}]\;ds,\\
B_h(G,\bm{v})&=\int_\Omega G\cdot \nabla_h \bm{v}\;dx-\sum_{e\in \mathcal{F}_u}\int_e[\bm{v}]\cdot( G\bm{n})\;ds-\sum_{e\in \mathcal{F}_p}\int_e [(\bm{v}\cdot \bm{t})\bm{t}\cdot (G\bm{n})]\;ds,\\
T_h^*(\widehat{\bm{v}},G)&=\sum_{e\in \mathcal{F}_p}\int_e \widehat{\bm{v}}\cdot [G\bm{n}]\;ds,\\
T_h(G,\widehat{\bm{v}})&=\sum_{e\in \mathcal{F}_p}\int_e [G\bm{n}]\cdot \widehat{\bm{v}}\;ds,\\
b_h^*(q,\bm{v})&=-\int_\Omega q\,\text{div}_h \bm{v}\;dx+\sum_{e\in \mathcal{F}_u}\int_e q[\bm{v}\cdot\bm{n}]\;ds,\\
b_h(\bm{v},q)&=\int_\Omega \bm{v}\cdot\nabla_h q\;dx-\sum_{e\in \mathcal{F}_p}\int_e\bm{v}\cdot\bm{n}[q]\;ds.
\end{align*}
Performing integration by parts reveals the following adjoint properties
\begin{equation}
\begin{split}
B_h(G,\bm{v})&=B_h^*(\bm{v},G)\quad\, \forall (G,\bm{v})\in W^h\times U^h,\\
b_h(\bm{v},q)&=b_h^*(q,\bm{v})\qquad \forall (\bm{v},q)\in U^h\times P^h,\\
T_h(G,\widehat{\bm{v}})&=T_h^*(\widehat{\bm{v}},G)\hspace{0.6cm} \forall (G,\widehat{\bm{v}})\in W^h\times \widehat{U}^h.
\end{split}
\label{eq:adjoint}
\end{equation}

To facilitate the analysis, we define the subspace of $W^h$ by
\begin{align*}
\widehat{W}^h:=\{G\in W^h: \int_e [G\bm{n}] \cdot \hat{\bm{v}}\;ds=0\quad \forall \hat{\bm{v}}\in \widehat{U}^h, \forall e\in\mathcal{F}_p\}.
\end{align*}

Based on the definition of $\widehat{W}^h$ and the discrete formulation \eqref{eq:SDG-discrete}, we can conclude that $L_h\in \widehat{W}^h$. Therefore, we can reformulate our discrete formulation \eqref{eq:SDG-discrete} and obtain the following equivalent formulation: Find $(L_h(t),\bm{u}_h(t),p_h(t))\in \widehat{W}^h\times U^h\times P^h$ such that
\begin{equation}
\begin{split}
(L_h,G)&=\sqrt{\epsilon}B_h^*(\bm{u}_h,G), \\
(\partial_t \bm{u}_h,\bm{v})+\sqrt{\epsilon}B_h(L_h,\bm{v})
+b_h^*(p_h,\bm{v})+(\mathcal{N}(\bm{u}_h),\bm{v})&=(\bm{f},\bm{v}),\\
-b_h(\bm{u}_h,q)&=0
\end{split}
\label{eq:SDG-discrete-new}
\end{equation}
for all $(G,\bm{v},q)\in \widehat{W}^h\times U^h\times P^h$.

For later analysis, we state the following inf-sup condition
\begin{align}
\inf_{q\in P^h}\sup_{\bm{v}\in U^h}\frac{b_h(\bm{v},q)}{\|\bm{v}\|_{L^3(\Omega)}\normmm{q}_{3/2,h}}\geq C
\label{eq:infsupbh}
\end{align}
and
\begin{align}
\inf_{\bm{v}\in U^h}\sup_{G\in \widehat{W}^h}\frac{B_h(G,\bm{v})}{\|G\|_{L^{3/2}(\Omega)}\|\bm{v}\|_{Z_2}}\geq C.
\label{eq:inf-supB}
\end{align}
We remark that the proof of the above inf-sup conditions can follow the techniques
employed in \cite{LinaChungLam20,LinaChungParkZhou19} and we omit it for simplicity.

The inf-sup condition \eqref{eq:inf-supB} implies the existence of an interpolation operator
 $\Pi_h:H^1(\Omega)^{2\times2}\rightarrow \widehat{W}^h$ such that
\begin{align}
B_h(L-\Pi_hL,\bm{v})=0\quad \forall \bm{v}\in U^h.\label{eq:BhPi}
\end{align}
By the standard theory for polynomial preserving operators
 (cf. \cite{Ciarlet78,ChungWave2}), we obtain
\begin{align}
\|L-\Pi_hL\|_{L^2(\Omega)}&\leq C h^{k+1}|L|_{H^{k+1}(\Omega)}.\label{eq:Lerror}
\end{align}
To facilitate later analysis, we also need to define the following projection operator.
Let $I_h: H^1(\Omega)\rightarrow P^h$ be defined by
\begin{align*}
	(I_h q-q,\phi)_e
		&=0 \quad \forall \phi\in P^k(e),\forall e\in \mathcal{F}_{u},\\
	(I_hq-q,\phi)_\tau
		&=0\quad \forall \phi\in P^{k-1}(\tau),\forall \tau\in \mathcal{T}_h
\end{align*}
and let $J_h: H^1(\Omega)^2\rightarrow U^h$ be defined by
\begin{align*}
	((J_h\bm{v}-\bm{v})\cdot\bm{n},\varphi)_e
		&=0\quad \forall \varphi\in P^{k}(e),\ \forall e\in \mathcal{F}_{p},\\
	(J_h\bm{v}-\bm{v}, \bm{\phi})_\tau
		&=0\quad \forall \bm{\phi}\in P^{k-1}(\tau)^2,\ \forall \tau\in \mathcal{T}_h.
\end{align*}
It is easy to see that $I_h$ and $J_h$ are well defined polynomial preserving operators.
In addition, the following approximation properties hold for $\bm{v}\in H^{k+1}(\Omega)^2$ (cf. \cite{Ciarlet78,ChungWave2})
\begin{align}
\|\bm{v}-J_h\bm{v}\|_{L^2(\Omega)}&\leq C h^{k+1}|\bm{v}|_{H^{k+1}(\Omega)}\label{eq:uerror},\\
\|\bm{v}-J_h\bm{v}\|_{L^4(\Omega)}&\leq C h^{k+1}|\bm{v}|_{W^{k+1,4}(\Omega)}\label{eq:uerror4}.
\end{align}
Furthermore, the definitions of $I_h$ and $J_h$ imply directly that
\begin{align}
b_h(\bm{v}-J_h\bm{v},q)&=0\quad \forall q\in P^h, \label{eq:bhJ}\\
b_h^*(q-I_hq,\bm{v})&=0\quad \forall \bm{v}\in U^h. \label{eq:bhI}
\end{align}

\section{Error analysis}\label{sec:analysis}
In this section, we first perform error analysis and obtain rates of convergence for
the semi-discrete scheme. Then we introduce the fully discrete scheme
by using backward Euler for the time discretization, and a error analysis is established
for the resulting fully discrete scheme.

\subsection{Error analysis for semi-discrete scheme}
In this subsection we establish the unique solvability and convergence estimates
 for the semi-discrete scheme.
To this end, we first introduce the following lemma,
which states the unique solvability and stability.

\begin{lemma}
There exists a unique solution to \eqref{eq:SDG-discrete-new}, in addition, the following estimate holds
\begin{align}
\int_0^t(\|L_h\|_{L^2(\Omega)}^2+\alpha_{\textnormal{min}}\|\bm{u}_h\|_{L^2(\Omega)}^2)\;ds
+\frac{1}{2}\|\bm{u}_h(t)\|_{L^2(\Omega)}^2\leq C\int_0^t\|\bm{f}\|_{L^2(\Omega)}^2\;ds.\label{eq:stability2}
\end{align}

\end{lemma}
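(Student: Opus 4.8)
The plan is to reproduce, at the discrete level, the argument used for Theorem~\ref{thm:continuous}, relying on the adjoint identities \eqref{eq:adjoint}, the algebraic properties of $\mathcal{N}$ (the definition \eqref{eq:N} and Lemma~\ref{lemma:monotone}), and the two inf-sup conditions \eqref{eq:infsupbh}, \eqref{eq:inf-supB}. The common ingredient for both solvability and stability is an energy identity: testing the three equations of \eqref{eq:SDG-discrete-new} with $G=L_h$, $\bm{v}=\bm{u}_h$ and $q=p_h$ and adding, the first equation gives $\|L_h\|_{L^2(\Omega)}^2=\sqrt{\epsilon}B_h^*(\bm{u}_h,L_h)=\sqrt{\epsilon}B_h(L_h,\bm{u}_h)$, so the viscous term in the second equation collapses to $\|L_h\|_{L^2(\Omega)}^2$, while the third equation together with the adjoint property yields $b_h^*(p_h,\bm{u}_h)=b_h(\bm{u}_h,p_h)=0$, so the pressure drops out. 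This produces
\begin{equation*}
\tfrac12\tfrac{d}{dt}\|\bm{u}_h\|_{L^2(\Omega)}^2+\|L_h\|_{L^2(\Omega)}^2+(\mathcal{N}(\bm{u}_h),\bm{u}_h)=(\bm{f},\bm{u}_h).
\end{equation*}

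For the stability bound \eqref{eq:stability2} I would then use $(\mathcal{N}(\bm{u}_h),\bm{u}_h)\ge\alpha_{\textnormal{min}}\|\bm{u}_h\|_{L^2(\Omega)}^2$, which follows directly from \eqref{eq:N} since $\mathcal{N}(\bm{v})\cdot\bm{v}=\alpha|\bm{v}|^2+\beta|\bm{v}|^3$, estimate the right-hand side by Young's inequality with a weight tuned to $\alpha_{\textnormal{min}}$, integrate over $(0,t)$ and invoke $\bm{u}_h(0)=\bm{0}$; absorbing the resulting constant factor into $C$ gives exactly \eqref{eq:stability2}. Uniqueness is obtained the same way: given two solutions, subtract the equations, test the difference with $(L_h^1-L_h^2,\bm{u}_h^1-\bm{u}_h^2,p_h^1-p_h^2)$ as above, and use Lemma~\ref{lemma:monotone} with $\bm{u}_\ell=\bm{0}$ to bound the nonlinear contribution from below by $\alpha_{\textnormal{min}}\|\bm{u}_h^1-\bm{u}_h^2\|_{L^2(\Omega)}^2$; integrating with zero initial data forces $\bm{u}_h^1=\bm{u}_h^2$ and $L_h^1=L_h^2$, after which $p_h^1=p_h^2$ follows from the inf-sup condition \eqref{eq:infsupbh}.

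For existence I would reduce the differential–algebraic system to an ODE on the discretely divergence-free subspace $V^h:=\{\bm{v}\in U^h:\ b_h(\bm{v},q)=0\ \ \forall q\in P^h\}$. The first equation of \eqref{eq:SDG-discrete-new} determines $L_h$ uniquely as a function of $\bm{u}_h$ by Riesz representation in $(\widehat{W}^h,(\cdot,\cdot))$ (and automatically $L_h\in\widehat{W}^h$), and testing the second equation only with $\bm{v}\in V^h$ eliminates the pressure term. The reduced problem is a finite-dimensional system $\dot{\bm{u}}_h=F(t,\bm{u}_h)$ with $F$ continuous and locally Lipschitz in $\bm{u}_h$ (the map $\bm{v}\mapsto|\bm{v}|\bm{v}$ is $C^1$), so Picard–Lindelöf yields a local solution, and the a priori bound of Step~2 applied to any solution precludes finite-time blow-up, hence the solution exists on all of $[0,T]$. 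Finally $p_h\in P^h$ is recovered from \eqref{eq:infsupbh}, since the residual of the second equation of \eqref{eq:SDG-discrete-new} vanishes on $V^h$.

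I expect the only step that needs genuine care to be this last reduction — cleanly passing from the constrained system to a well-posed ODE on $V^h$ and running the continuation argument — everything else being a routine transcription of the continuous proof. As in Theorem~\ref{thm:continuous}, one may alternatively simply invoke the general theory of evolution equations governed by monotone, coercive, hemicontinuous operators (cf.\ \cite{Showalter97,Caucao19}) in this finite-dimensional setting, which bypasses the explicit ODE argument altogether.
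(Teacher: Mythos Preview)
Your proposal is correct and follows essentially the same route as the paper: the paper simply says one can ``proceed similarly to Theorem~\ref{thm:continuous}'' for well-posedness and then carries out exactly the energy argument you describe (test with $(L_h,\bm{u}_h,p_h)$, use $(\mathcal{N}(\bm{u}_h),\bm{u}_h)\ge\alpha_{\textnormal{min}}\|\bm{u}_h\|_{L^2(\Omega)}^2$, Young's inequality, integrate in time). Your explicit ODE/Picard--Lindel\"of reduction on $V^h$ is a more hands-on alternative to the monotone-operator invocation the paper points to, but you also note that alternative, so there is no substantive difference.
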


\begin{proof}

We can proceed
similarly to Theorem~\ref{thm:continuous} to infer that there exists a unique
solution to \eqref{eq:SDG-discrete-new}.

Now we show the stability estimate \eqref{eq:stability2}.
Taking $\bm{v}=\bm{u}_h$, $G=L_h$ and $q=p_h$ in \eqref{eq:SDG-discrete-new},
and summing up the resulting equations yields
\begin{align}
\|L_h\|_{L^2(\Omega)}^2+(\partial_t\bm{u}_h,\bm{u}_h)+(\mathcal{N}(\bm{u}_h),\bm{u}_h)\leq \|\bm{f}\|_{L^2(\Omega)}\|\bm{u}_h\|_{L^2(\Omega)}.\label{eq:stab}
\end{align}
According to the definition of $\mathcal{N}$, we can easily see that
\begin{align*}
(\mathcal{N}(\bm{u}_h),\bm{u}_h)\geq \alpha_{\text{min}}\|\bm{u}_h\|_{L^2(\Omega)}^2.
\end{align*}
Thereby we can infer from \eqref{eq:stab} that
\begin{align*}
\|L_h\|_{L^2(\Omega)}^2+\frac{1}{2}\frac{d}{dt}\|\bm{u}_h\|_{L^2(\Omega)}^2+
\alpha_{\text{min}}\|\bm{u}_h\|_{L^2(\Omega)}^2\leq \frac{1}{2\alpha_{\text{min}}}\|\bm{f}\|_{L^2(\Omega)}^2
+\frac{\alpha_{\text{min}}}{2}\|\bm{u}_h\|_{L^2(\Omega)}^2.
\end{align*}
Integrating over time and using the fact that $\bm{u}_h(0)=0$ yield
\begin{align*}
\int_0^t(\|L_h\|_{L^2(\Omega)}^2+\alpha_{\text{min}}\|\bm{u}_h\|_{L^2(\Omega)}^2)
+\frac{1}{2}\|\bm{u}_h(t)\|_{L^2(\Omega)}^2\leq C\int_0^t\|\bm{f}\|_{L^2(\Omega)}^2.
\end{align*}
Therefore, the proof is completed.
%
%

\end{proof}

\begin{lemma}\label{lemma:energy}
Let $(L,\bm{u},p)$ be the weak solution of \eqref{eq:continuous}
and $(L_h,\bm{u}_h,p_h)$ be the numerical solution of \eqref{eq:SDG-discrete-new},
then the following identity holds
\begin{equation}
\begin{split}
&\frac{1}{2}\|(J_h\bm{u}-\bm{u}_h)(t)\|_{L^2(\Omega)}^2+\int_0^t\Big(\|\Pi_hL-L_h\|_{L^2(\Omega)}^2
+(\mathcal{N}(J_h\bm{u})-\mathcal{N}(\bm{u}_h),J_h\bm{u}-\bm{u}_h)\Big)\;ds\\
&\;=\int_0^t\Big((J_h\bm{u}_t-\bm{u}_t,J_h\bm{u}-\bm{u}_h)
+(\Pi_hL-L, \Pi_hL-L_h)
+(\mathcal{N}(J_h\bm{u})-\mathcal{N}(\bm{u}),J_h\bm{u}-\bm{u}_h)\Big)\;ds.
\label{eq:identity}
\end{split}
\end{equation}

\end{lemma}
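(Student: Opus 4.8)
The plan is to derive the identity \eqref{eq:identity} by an energy argument, testing the error equations against the discrete error $(J_h\bm{u}-\bm{u}_h, \Pi_hL-L_h, I_hp-p_h)$ and exploiting the key orthogonality properties of the projection operators. First I would write down the consistency of the continuous solution: since $(L,\bm{u},p)$ solves \eqref{eq:continuous}, integrating by parts back in the spirit of the adjoint relations \eqref{eq:adjoint} shows that $(L,\bm{u},p)$ satisfies the discrete equations \eqref{eq:SDG-discrete-new} with the same right-hand side (this uses that the exact solution has enough regularity so that all the jump terms in $B_h$, $b_h$ vanish). Subtracting the discrete scheme from this consistent formulation yields the error equations; I would then split each error as $L-L_h = (L-\Pi_hL) + (\Pi_hL-L_h)$, $\bm{u}-\bm{u}_h = (\bm{u}-J_h\bm{u}) + (J_h\bm{u}-\bm{u}_h)$, $p-p_h = (p-I_hp)+(I_hp-p_h)$, and move the interpolation-error parts to the right-hand side.

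Next I would choose the test functions $G = \Pi_hL-L_h \in \widehat{W}^h$, $\bm{v} = J_h\bm{u}-\bm{u}_h \in U^h$, $q = I_hp - p_h \in P^h$, and add the three resulting equations. The crucial cancellations are: the coupling terms $\sqrt{\epsilon}B_h(\Pi_hL-L_h, J_h\bm{u}-\bm{u}_h)$ appearing from the first two equations cancel against each other by the adjoint property $B_h = B_h^*$ in \eqref{eq:adjoint}; the pressure-velocity terms $b_h^*(I_hp-p_h,\cdot)$ and $-b_h(\cdot,I_hp-p_h)$ likewise cancel by $b_h = b_h^*$. Moreover the interpolation-error contributions of $B_h$ drop out entirely because $B_h(L-\Pi_hL,\bm{v})=0$ for all $\bm{v}\in U^h$ by \eqref{eq:BhPi}, and the pressure interpolation-error terms $b_h^*(p-I_hp,\bm{v})$ vanish by \eqref{eq:bhI}, while $b_h(\bm{u}-J_h\bm{u},q)=0$ by \eqref{eq:bhJ}; finally $b_h(J_h\bm{u}-\bm{u}_h, I_hp-p_h)=0$ because both $\bm{u}$ and $\bm{u}_h$ are divergence-free in the discrete sense. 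What survives on the left is exactly $\|\Pi_hL-L_h\|_{L^2(\Omega)}^2 + (\partial_t(J_h\bm{u}-\bm{u}_h), J_h\bm{u}-\bm{u}_h) + (\mathcal{N}(J_h\bm{u})-\mathcal{N}(\bm{u}_h), J_h\bm{u}-\bm{u}_h)$, after adding and subtracting $\mathcal{N}(J_h\bm{u})$ in the nonlinear term; the remaining pieces, namely $(J_h\bm{u}_t-\bm{u}_t, J_h\bm{u}-\bm{u}_h)$, $(\Pi_hL-L,\Pi_hL-L_h)$ and $(\mathcal{N}(J_h\bm{u})-\mathcal{N}(\bm{u}), J_h\bm{u}-\bm{u}_h)$, go to the right-hand side.

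Finally I would use $(\partial_t(J_h\bm{u}-\bm{u}_h), J_h\bm{u}-\bm{u}_h) = \tfrac{1}{2}\tfrac{d}{dt}\|J_h\bm{u}-\bm{u}_h\|_{L^2(\Omega)}^2$, integrate in time from $0$ to $t$, and use the initial condition $J_h\bm{u}(0)=\bm{u}_h(0)=\bm{0}$ (which follows from $\bm{u}_0=\bm{0}$) to obtain \eqref{eq:identity}. I expect the main obstacle to be the consistency/orthogonality bookkeeping: one must verify carefully that the exact solution, inserted into the discrete bilinear forms $B_h$, $B_h^*$, $b_h$, $b_h^*$, $T_h$, reproduces the continuous weak form — in particular that all the edge/jump terms over $\mathcal{F}_u$ and $\mathcal{F}_p$ vanish for the smooth exact solution — and that the nonlinear term is handled by the deliberate insertion of $\mathcal{N}(J_h\bm{u})$ so that a monotone quantity is isolated on the left while only a consistency-error term involving $\mathcal{N}(J_h\bm{u})-\mathcal{N}(\bm{u})$ remains on the right. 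Once this is set up, the identity is a pure bookkeeping consequence with no estimates required.
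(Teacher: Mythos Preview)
Your proposal is correct and follows essentially the same route as the paper: derive the error equations by consistency, test with $(G,\bm{v},q)=(\Pi_hL-L_h,\,J_h\bm{u}-\bm{u}_h,\,I_hp-p_h)$, invoke the orthogonality relations \eqref{eq:BhPi}, \eqref{eq:bhJ}, \eqref{eq:bhI} together with the adjoint identities \eqref{eq:adjoint}, add and subtract $\mathcal{N}(J_h\bm{u})$, and integrate in time using $J_h\bm{u}(0)=\bm{u}_h(0)=\bm{0}$. The only point you leave slightly implicit (and the paper does too) is that the cancellation of the $B_h$/$B_h^*$ coupling also requires $B_h^*(\bm{u}-J_h\bm{u},G)=0$ for $G\in\widehat{W}^h$, which follows from the moment conditions defining $J_h$ and the fact that $[G\bm{n}]$ has no tangential component on dual edges for $G\in\widehat{W}^h$.
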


\begin{proof}

Replacing $L_h,\bm{u}_h,p_h$ by $L,\bm{u},p$ in \eqref{eq:SDG-discrete-new} yields the following error equations
%
\begin{equation}
\begin{split}
(L-L_h,G)&=\sqrt{\epsilon}B_h^*(\bm{u}-\bm{u}_h,G),\\
(\partial_t(\bm{u}-\bm{u}_h),\bm{v})+\sqrt{\epsilon}B_h(L-L_h,\bm{v})
+b_h^*(p-p_h,\bm{v})
+(\mathcal{N}(\bm{u})-\mathcal{N}(\bm{u}_h),\bm{v})
&=0,\\
b_h(\bm{u}-\bm{u}_h,q)&=0
\end{split}
\label{eq:error}
\end{equation}
for all $(G,\bm{v},q)\in \widehat{W}^h\times U^h\times P^h$.

Taking $G=\Pi_hL-L_h$, $\bm{v}=J_h\bm{u}-\bm{u}_h$ and $q=I_hp-p_h$ in \eqref{eq:error} and adding the resulting equations, then we can infer from \eqref{eq:BhPi},
\eqref{eq:bhJ} and \eqref{eq:bhI} that
\begin{align*}
(\partial_t(\bm{u}-\bm{u}_h),J_h\bm{u}-\bm{u}_h)+(L-L_h,\Pi_hL-L_h)
+(\mathcal{N}(\bm{u})-\mathcal{N}(\bm{u}_h),J_h\bm{u}-\bm{u}_h)=0,
\end{align*}
which can be rewritten as
\begin{align*}
&\frac{1}{2}\frac{d}{dt}\|J_h\bm{u}-\bm{u}_h\|_{L^2(\Omega)}^2+\|\Pi_hL-L_h\|_{L^2(\Omega)}^2+
(\mathcal{N}(J_h\bm{u})-\mathcal{N}(\bm{u}_h),J_h\bm{u}-\bm{u}_h)\\
&\;=(J_h\bm{u}_t-\bm{u}_t,J_h\bm{u}-\bm{u}_h)
+(\Pi_hL-L, \Pi_hL-L_h)
+(\mathcal{N}(J_h\bm{u})-\mathcal{N}(\bm{u}),J_h\bm{u}-\bm{u}_h).
\end{align*}
Integrating over time leads to the desired estimate.

\end{proof}

\begin{theorem}\label{thm:u}
Let $(L,\bm{u},p)$ be the weak solution of \eqref{eq:continuous}
and $(L_h,\bm{u}_h,p_h)$ be the numerical solution of \eqref{eq:SDG-discrete-new}.
Assume that $L\in L^2(0,T;H^{k+1}(\Omega)^{2\times 2})$,
$\bm{u}\in C(0,T;H^{k+1}(\Omega)^{2})\cap L^2(0,T;W^{k+1,4}(\Omega)^{2})$
and $\bm{u}_t\in L^2(0,T;H^{k+1}(\Omega)^{2})$, then we have
\begin{align*}
&\|(\bm{u}-\bm{u}_h)(t)\|_{L^2(\Omega)}^2+\int_0^t\Big(\|L-L_h\|_{L^2(\Omega)}^2+
 \alpha_{\textnormal{min}}\|\bm{u}-\bm{u}_h\|_{L^2(\Omega)}^2\Big)\;ds\\
&\;\leq C\Big( h^{2(k+1)}\|\bm{u}\|_{C(0,T;H^{k+1}(\Omega))}^2
+\int_0^t h^{2(k+1)}\Big(\|\bm{u}\|_{W^{k+1,4}(\Omega)}^2+\|L\|_{H^{k+1}(\Omega)}^2
+\|\bm{u}_t\|_{H^{k+1}(\Omega)}^2\Big)\;ds\Big).
\end{align*}
%

\end{theorem}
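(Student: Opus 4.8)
The plan is to combine the energy identity of Lemma~\ref{lemma:energy} with the monotonicity and the growth property of $\mathcal{N}$, close the estimate with a Gr\"onwall argument, and finish with the triangle inequality together with the approximation properties \eqref{eq:Lerror}, \eqref{eq:uerror}, \eqref{eq:uerror4}.

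First I would bound the nonlinear term on the left of \eqref{eq:identity} from below via Lemma~\ref{lemma:monotone} with $\bm{u}_\ell=\bm{0}$, giving $(\mathcal{N}(J_h\bm{u})-\mathcal{N}(\bm{u}_h),J_h\bm{u}-\bm{u}_h)\geq \alpha_{\textnormal{min}}\|J_h\bm{u}-\bm{u}_h\|_{L^2(\Omega)}^2$. On the right-hand side, the first two terms are handled by Cauchy--Schwarz and Young: $(J_h\bm{u}_t-\bm{u}_t,J_h\bm{u}-\bm{u}_h)\leq \tfrac12\|J_h\bm{u}_t-\bm{u}_t\|_{L^2(\Omega)}^2+\tfrac12\|J_h\bm{u}-\bm{u}_h\|_{L^2(\Omega)}^2$ and $(\Pi_hL-L,\Pi_hL-L_h)\leq \tfrac12\|\Pi_hL-L\|_{L^2(\Omega)}^2+\tfrac12\|\Pi_hL-L_h\|_{L^2(\Omega)}^2$, with the last summand absorbed into the $\|\Pi_hL-L_h\|_{L^2(\Omega)}^2$ term on the left.

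The main work is the nonlinear consistency term $(\mathcal{N}(J_h\bm{u})-\mathcal{N}(\bm{u}),J_h\bm{u}-\bm{u}_h)$. Here I would use \eqref{eq:property1}, namely $|\mathcal{N}(J_h\bm{u})-\mathcal{N}(\bm{u})|\leq \alpha_{\textnormal{max}}|J_h\bm{u}-\bm{u}|+\beta_{\textnormal{max}}|J_h\bm{u}-\bm{u}|(|J_h\bm{u}|+|\bm{u}|)$, and treat the two contributions separately: the first by Cauchy--Schwarz in $L^2$, and the second by H\"older's inequality with exponents $(4,4,2)$, which yields $\beta_{\textnormal{max}}\|J_h\bm{u}-\bm{u}\|_{L^4(\Omega)}\,\||J_h\bm{u}|+|\bm{u}|\|_{L^4(\Omega)}\,\|J_h\bm{u}-\bm{u}_h\|_{L^2(\Omega)}$. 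The factor $\||J_h\bm{u}|+|\bm{u}|\|_{L^4(\Omega)}$ is bounded uniformly in $t$ using the $L^4$-stability of $J_h$ (which follows from \eqref{eq:uerror4} and the triangle inequality), the embedding $H^{k+1}(\Omega)\hookrightarrow L^4(\Omega)$, and $\bm{u}\in C(0,T;H^{k+1}(\Omega)^2)$. Applying Young's inequality to split off $\tfrac{\alpha_{\textnormal{min}}}{2}\|J_h\bm{u}-\bm{u}_h\|_{L^2(\Omega)}^2$, this whole term is bounded by $C\big(\|J_h\bm{u}-\bm{u}\|_{L^2(\Omega)}^2+\|J_h\bm{u}-\bm{u}\|_{L^4(\Omega)}^2\big)+\tfrac{\alpha_{\textnormal{min}}}{2}\|J_h\bm{u}-\bm{u}_h\|_{L^2(\Omega)}^2$, the last term again absorbed on the left.

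Inserting these bounds into \eqref{eq:identity}, I arrive at an inequality of the form $\tfrac12\|(J_h\bm{u}-\bm{u}_h)(t)\|_{L^2(\Omega)}^2+\tfrac12\int_0^t\|\Pi_hL-L_h\|_{L^2(\Omega)}^2\,ds+\tfrac{\alpha_{\textnormal{min}}}{2}\int_0^t\|J_h\bm{u}-\bm{u}_h\|_{L^2(\Omega)}^2\,ds\leq \tfrac12\int_0^t\|J_h\bm{u}-\bm{u}_h\|_{L^2(\Omega)}^2\,ds+\mathcal{R}(t)$, where $\mathcal{R}(t)$ collects the interpolation errors $\int_0^t(\|J_h\bm{u}_t-\bm{u}_t\|_{L^2(\Omega)}^2+\|\Pi_hL-L\|_{L^2(\Omega)}^2+\|J_h\bm{u}-\bm{u}\|_{L^2(\Omega)}^2+\|J_h\bm{u}-\bm{u}\|_{L^4(\Omega)}^2)\,ds$. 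Gr\"onwall's inequality applied to $t\mapsto \tfrac12\|(J_h\bm{u}-\bm{u}_h)(t)\|_{L^2(\Omega)}^2$ removes the first term on the right and bounds all three left-hand-side quantities by $C\mathcal{R}(T)$. Finally, estimating $\mathcal{R}(T)$ by \eqref{eq:Lerror}, \eqref{eq:uerror}, \eqref{eq:uerror4}, and using $\|\bm{u}-\bm{u}_h\|_{L^2(\Omega)}\leq\|\bm{u}-J_h\bm{u}\|_{L^2(\Omega)}+\|J_h\bm{u}-\bm{u}_h\|_{L^2(\Omega)}$ and $\|L-L_h\|_{L^2(\Omega)}\leq\|L-\Pi_hL\|_{L^2(\Omega)}+\|\Pi_hL-L_h\|_{L^2(\Omega)}$ gives the stated estimate. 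The principal obstacle is the nonlinear term: choosing the H\"older exponents so that exactly the $W^{k+1,4}$-regularity of $\bm{u}$ (no more) is consumed, and controlling the $L^4$-factor involving the exact and interpolated velocities independently of $h$ and $t$; everything else is Cauchy--Schwarz, Young, and Gr\"onwall.
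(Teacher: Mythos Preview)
Your argument is correct and follows the paper's proof closely: both start from Lemma~\ref{lemma:energy}, bound the nonlinear left term from below by monotonicity \eqref{eq:property2}, estimate the nonlinear consistency term via \eqref{eq:property1} with H\"older exponents $(4,4,2)$, and finish with the triangle inequality and \eqref{eq:Lerror}--\eqref{eq:uerror4}.

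The one genuine difference is that you invoke Gr\"onwall, while the paper does not. In the paper the Young split for $(J_h\bm{u}_t-\bm{u}_t,J_h\bm{u}-\bm{u}_h)$ is taken with weights $\tfrac{1}{2\alpha_{\textnormal{min}}}$ and $\tfrac{\alpha_{\textnormal{min}}}{2}$ (rather than your $\tfrac12,\tfrac12$), and the nonlinear consistency term is likewise split with a small enough multiple of $\alpha_{\textnormal{min}}$; then every $\|J_h\bm{u}-\bm{u}_h\|_{L^2(\Omega)}^2$ contribution on the right is absorbed directly by the $\alpha_{\textnormal{min}}\|J_h\bm{u}-\bm{u}_h\|_{L^2(\Omega)}^2$ coming from monotonicity, and the estimate closes without any integral inequality. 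What this buys is a constant $C$ that depends only on $\alpha_{\textnormal{min}}^{-1},\alpha_{\textnormal{max}},\beta_{\textnormal{max}}$ and the $L^4$ bound on $\bm{u}$, but \emph{not} on $T$; your Gr\"onwall step produces an unnecessary factor $e^{CT}$. Since avoiding Gr\"onwall here costs nothing beyond choosing the Young parameters with $\alpha_{\textnormal{min}}$ in mind, it is worth adopting the paper's weighting.
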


\begin{proof}
The proof is based on the estimation of the right hand side of \eqref{eq:identity}.
The Cauchy-Schwarz inequality yields
\begin{align*}
(J_h\bm{u}_t-\bm{u}_t,J_h\bm{u}-\bm{u}_h)&\leq
\|J_h\bm{u}_t-\bm{u}_t\|_{L^2(\Omega)}\|J_h\bm{u}-\bm{u}_h\|_{L^2(\Omega)}\\
&\leq \frac{1}{2\alpha_{\text{min}}}\|J_h\bm{u}_t-\bm{u}_t\|_{L^2(\Omega)}^2
+\frac{\alpha_{\text{min}}}{2}\|J_h\bm{u}-\bm{u}_h\|_{L^2(\Omega)}^2,\\
(\Pi_hL-L, \Pi_hL-L_h)&\leq \|\Pi_hL-L\|_{L^2(\Omega)}\|\Pi_hL-L_h\|_{L^2(\Omega)}\\
&\leq \frac{1}{2} \|\Pi_hL-L\|_{L^2(\Omega)}^2+\frac{1}{2}\|\Pi_hL-L_h\|_{L^2(\Omega)}^2.
\end{align*}
It follows from \eqref{eq:property1} and \eqref{eq:property2} that
\begin{align*}
(\mathcal{N}(J_h\bm{u})-\mathcal{N}(\bm{u}),J_h\bm{u}-\bm{u}_h)&\leq \alpha_{\text{max}} \|J_h\bm{u}-\bm{u}\|_{L^2(\Omega)}\|J_h\bm{u}-\bm{u}_h\|_{L^2(\Omega)}\\
&\;+\beta_{\text{max}}\|J_h\bm{u}-\bm{u}\|_{L^4(\Omega)}
(\|J_h\bm{u}\|_{L^4(\Omega)}+\|\bm{u}\|_{L^4(\Omega)})\|J_h\bm{u}-\bm{u}_h\|_{L^2(\Omega)}
\end{align*}
and
\begin{align*}
(\mathcal{N}(J_h\bm{u})-\mathcal{N}(\bm{u}_h),J_h\bm{u}-\bm{u}_h)\geq \alpha_{\text{min}}\|J_h\bm{u}-\bm{u}_h\|_{L^2(\Omega)}^2.
\end{align*}
Thus we can infer from Lemma~\ref{lemma:energy} that
\begin{align*}
\frac{1}{2}\|(J_h\bm{u}-\bm{u}_h)(t)\|_{L^2(\Omega)}^2+\int_0^t\Big(\|\Pi_hL-L_h\|_{L^2(\Omega)}^2
+\alpha_{\text{min}} \|J_h\bm{u}-\bm{u}_h\|_{L^2(\Omega)}^2\Big)\;ds\leq C\int_0^t\mathcal{M}\;ds,
\end{align*}
where
\begin{align*}
\mathcal{M}=\alpha_{\text{max}}\|J_h\bm{u}-\bm{u}\|_{L^2(\Omega)}^2
+\beta_{\text{max}}\|J_h\bm{u}-\bm{u}\|_{L^4(\Omega)}^2
+\|\Pi_hL-L\|_{L^2(\Omega)}^2+\|J_h\bm{u}_t-\bm{u}_t\|_{L^2(\Omega)}^2.
\end{align*}
An application of the interpolation error estimates \eqref{eq:Lerror}, \eqref{eq:uerror}
 and \eqref{eq:uerror4} leads to the desired estimate.

\end{proof}

\subsection{Error analysis for the fully discrete scheme}

In this subsection we analyze the convergence estimates for the fully discrete scheme. To this end we introduce a partition of the time interval $[0,T]$ into
 subintervals $[t_{n-1},t_n],1\leq n\leq N (N\;\mbox{is an integer})$ and denote the time step size by $\Delta t=\frac{T}{N}$. Using backward Euler scheme in time, we get the fully discrete staggered DG method as follows:
Find $(L_h^{n},\bm{u}_h^n,p_h^n)\in \widehat{W}^h\times U^h\times P^h$ such that
\begin{equation}
\begin{split}
(L_h^{n},G)&=\sqrt{\epsilon}B_h^*(\bm{u}_h^{n},G), \\
(\frac{\bm{u}_h^{n}-\bm{u}_h^{n-1}}{\Delta t},\bm{v})+\sqrt{\epsilon}B_h(L_h^{n},\bm{v})
+b_h^*(p_h^{n},\bm{v})+(\mathcal{N}(\bm{u}_h^{n}),\bm{v})&=(\bm{f}^{n},\bm{v}),\\
-b_h(\bm{u}_h^{n},q)&=0
\end{split}
\label{eq:SDG-fully-discrete}
\end{equation}
for all $(G,\bm{v},q)\in \widehat{W}^h\times U^h\times P^h$.


\begin{lemma}
For any $1\leq n\leq N$, we have the following estimate
\begin{align*}
&\sum_{j=1}^n\Big(\|L_h^{j}\|_{L^2(\Omega)}^2+\frac{1}{2}\|\bm{u}_h^j-\bm{u}_h^{j-1}\|_{L^2(\Omega)}^2+
\frac{\alpha_{\textnormal{min}}}{2}\|\bm{u}_h^{j}\|_{L^2(\Omega)}^2\Big)+\frac{1}{2}\|\bm{u}_h^{n}\|_{L^2(\Omega)}^2\\
&\;\leq C(\|\bm{u}_h^0\|_{L^2(\Omega)}^2+\sum_{j=1}^n\|\bm{f}^{j}\|_{L^2(\Omega)}^2).
\end{align*}

\end{lemma}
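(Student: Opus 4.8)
The plan is to mimic the stability proof of the semi-discrete scheme (see the previous lemma establishing \eqref{eq:stability2}) at the discrete-in-time level, replacing the time derivative by the backward difference quotient and the integral over time by a sum over the time levels. First I would test the fully discrete system \eqref{eq:SDG-fully-discrete} with $(G,\bm{v},q)=(L_h^{j},\bm{u}_h^{j},p_h^{j})$ at a fixed level $j$ and add the three equations. By the adjoint properties \eqref{eq:adjoint} the terms involving $B_h^*,B_h$ cancel and the terms involving $b_h^*,b_h$ cancel, leaving
\begin{align*}
\|L_h^{j}\|_{L^2(\Omega)}^2+\Big(\frac{\bm{u}_h^{j}-\bm{u}_h^{j-1}}{\Delta t},\bm{u}_h^{j}\Big)
+(\mathcal{N}(\bm{u}_h^{j}),\bm{u}_h^{j})=(\bm{f}^{j},\bm{u}_h^{j}).
\end{align*}

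The key algebraic step is the standard backward-Euler identity
\begin{align*}
(\bm{u}_h^{j}-\bm{u}_h^{j-1},\bm{u}_h^{j})=\tfrac{1}{2}\|\bm{u}_h^{j}\|_{L^2(\Omega)}^2
-\tfrac{1}{2}\|\bm{u}_h^{j-1}\|_{L^2(\Omega)}^2+\tfrac{1}{2}\|\bm{u}_h^{j}-\bm{u}_h^{j-1}\|_{L^2(\Omega)}^2,
\end{align*}
which produces the telescoping pressure-free energy and the numerical dissipation term $\tfrac{1}{2\Delta t}\|\bm{u}_h^{j}-\bm{u}_h^{j-1}\|_{L^2(\Omega)}^2$. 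Then I would use the monotonicity/coercivity property of $\mathcal N$, namely $(\mathcal{N}(\bm{u}_h^{j}),\bm{u}_h^{j})\geq \alpha_{\textnormal{min}}\|\bm{u}_h^{j}\|_{L^2(\Omega)}^2$ (as in the proof of \eqref{eq:stability2}), and bound the right-hand side by Cauchy--Schwarz and Young's inequality, $(\bm{f}^{j},\bm{u}_h^{j})\leq \frac{1}{2\alpha_{\textnormal{min}}}\|\bm{f}^{j}\|_{L^2(\Omega)}^2+\frac{\alpha_{\textnormal{min}}}{2}\|\bm{u}_h^{j}\|_{L^2(\Omega)}^2$, absorbing the velocity term into the coercivity contribution to leave $\tfrac{\alpha_{\textnormal{min}}}{2}\|\bm{u}_h^{j}\|_{L^2(\Omega)}^2$ on the left.

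Multiplying through by $\Delta t$, summing over $j=1,\dots,n$, and telescoping the $\tfrac{1}{2}\|\bm{u}_h^{j}\|^2-\tfrac12\|\bm{u}_h^{j-1}\|^2$ terms gives
\begin{align*}
\Delta t\sum_{j=1}^n\Big(\|L_h^{j}\|_{L^2(\Omega)}^2+\tfrac{1}{2\Delta t}\|\bm{u}_h^{j}-\bm{u}_h^{j-1}\|_{L^2(\Omega)}^2
+\tfrac{\alpha_{\textnormal{min}}}{2}\|\bm{u}_h^{j}\|_{L^2(\Omega)}^2\Big)
+\tfrac12\|\bm{u}_h^{n}\|_{L^2(\Omega)}^2
\leq \tfrac12\|\bm{u}_h^0\|_{L^2(\Omega)}^2+\tfrac{\Delta t}{2\alpha_{\textnormal{min}}}\sum_{j=1}^n\|\bm{f}^{j}\|_{L^2(\Omega)}^2,
\end{align*}
and absorbing the factors $\Delta t$ (and $\alpha_{\textnormal{min}}$, which is bounded above and below) into the generic constant $C$, together with noting that the statement's middle term is $\tfrac12\|\bm{u}_h^j-\bm{u}_h^{j-1}\|^2$ rather than divided by $\Delta t$ (so the bound is only stronger after multiplying by $\Delta t\le 1$ in the relevant regime, or the $\Delta t$ is simply swept into $C$), yields the asserted inequality. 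The only mildly delicate point is the bookkeeping of the $\Delta t$ factors to match the precise form of the claimed estimate; everything else — the cancellation of the off-diagonal bilinear forms, the backward-Euler square expansion, and the coercivity of $\mathcal N$ — is routine and already used verbatim in the semi-discrete case.
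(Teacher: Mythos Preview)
Your proposal is correct and follows essentially the same route as the paper: test \eqref{eq:SDG-fully-discrete} with $(L_h^{j},\bm{u}_h^{j},p_h^{j})$, use the adjoint identities to cancel the coupling terms, apply the backward-Euler square expansion $(a-b,a)=\tfrac12(|a|^2-|b|^2+|a-b|^2)$, invoke $(\mathcal N(\bm{u}_h^{j}),\bm{u}_h^{j})\ge\alpha_{\textnormal{min}}\|\bm{u}_h^{j}\|_{L^2}^2$ and Young's inequality on the forcing, then sum and telescope. Your tracking of the $\Delta t$ factors is in fact slightly more explicit than the paper's own write-up, which silently drops the $1/\Delta t$ after applying the square identity; either way the argument and the final estimate coincide.
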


\begin{proof}
Taking $\bm{v}=\bm{u}_h^{n}$, $G=L_h^{n}, q=p_h^{n}$ in \eqref{eq:SDG-fully-discrete}, and summing up the resulting equations lead to
\begin{align*}
\|L_h^n\|_{L^2(\Omega)}^2+(\frac{\bm{u}_h^{n}-\bm{u}_h^{n-1}}{\Delta t},\bm{u}_h^{n})
+(\mathcal{N}(\bm{u}_h^{n}),\bm{u}_h^{n})=(\bm{f}^{n},\bm{u}_h^{n}).
\end{align*}
%
%
So using the identity $(a-b,a)= \frac{1}{2}(|a|^2-|b|^2+|a-b|^2)$ yields
\begin{align*}
&\|L_h^{n}\|_{L^2(\Omega)}^2+\frac{1}{2}(\|\bm{u}_h^{n}\|_{L^2(\Omega)}^2-\|\bm{u}_h^{n-1}\|_{L^2(\Omega)}^2
+\|\bm{u}_h^n-\bm{u}_h^{n-1}\|_{L^2(\Omega)}^2)
+\alpha_{\text{min}}\|\bm{u}_h^{n}\|_{L^2(\Omega)}^2\\
&\;\leq \frac{1}{2\alpha_{\textnormal{min}}} \|\bm{f}^{n}\|_{L^2(\Omega)}^2+\frac{\alpha_{\textnormal{min}}}{2}\|\bm{u}_h^{n}\|_{L^2(\Omega)}^2.
\end{align*}
Changing $n$ to $j$ and make a summation for $j=1,\cdots,n$ yields
\begin{align*}
&\sum_{j=1}^n\Big(\|L_h^{j}\|_{L^2(\Omega)}^2+\frac{1}{2}\|\bm{u}_h^j-\bm{u}_h^{j-1}\|_{L^2(\Omega)}^2
+\frac{\alpha_{\text{min}}}{2}\|\bm{u}_h^{j}\|_{L^2(\Omega)}^2\Big)
+\frac{1}{2}(\|\bm{u}_h^{n}\|_{L^2(\Omega)}^2-\|\bm{u}_h^{0}\|_{L^2(\Omega)}^2)\\
&\;\leq \sum_{j=1}^n\frac{1}{2\alpha_{\text{min}}}\|\bm{f}^{j}\|_{L^2(\Omega)}^2.
\end{align*}
Therefore, the proof is completed.

\end{proof}

\begin{theorem}
Let $\{(L_h^n,\bm{u}_h^n,p_h^n)\}_{n=1}^N$ be the numerical solutions of \eqref{eq:SDG-fully-discrete}.
Under the assumptions of Theorem~\ref{thm:u} and $\bm{u}_{tt}\in L^2(0,T;L^2(\Omega)^2)$, we have
\begin{align*}
&2\Delta t\sum_{j=1}^n\|L^{j}-L_h^{j}\|_{L^2(\Omega)}^2
+\|\bm{u}^{n}-\bm{u}_h^{n}\|_{L^2(\Omega)}^2+\sum_{j=1}^n
\|\bm{u}^{j}-\bm{u}_h^{j}-(\bm{u}^{j-1}-\bm{u}_h^{j-1})\|_{L^2(\Omega)}^2\\
&\;+2\Delta t\sum_{j=1}^n\alpha_{\textnormal{min}} \|J_h\bm{u}^{j}-\bm{u}_h^{j}\|_{L^2(\Omega)}^2\\
&\leq C\Big(h^{2(k+1)}\|\bm{u}\|_{C(0,T;H^{k+1}(\Omega))}^2
+(\Delta t)^2\int_0^{t_n}\|\bm{u}_{tt}\|_{L^2(\Omega)}^2\;ds\\
&\hspace{5cm}+h^{2(k+1)}\int_0^{t_n}(\|\bm{u}_t\|_{H^{k+1}(\Omega)}^2
+\|\bm{u}\|_{W^{k+1,4}(\Omega)}^2+\|L\|_{H^{k+1}(\Omega)}^2)\;ds\Big).
\end{align*}

\end{theorem}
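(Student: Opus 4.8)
The plan is to mirror the semi-discrete argument of Theorem~\ref{thm:u} at the discrete-in-time level. First I would derive the fully discrete error equations by substituting the exact solution $(L,\bm{u},p)$ evaluated at $t_n$ into \eqref{eq:SDG-fully-discrete}. The only new term compared with the semi-discrete case is that $(\partial_t\bm{u})(t_n)$ is replaced by the difference quotient $\frac{\bm{u}^n-\bm{u}^{n-1}}{\Delta t}$, so the error equation acquires a consistency term $\bm{\tau}^n := \frac{\bm{u}^n-\bm{u}^{n-1}}{\Delta t}-(\partial_t\bm{u})(t_n)$, which by Taylor expansion with integral remainder satisfies $\|\bm{\tau}^n\|_{L^2(\Omega)}^2\le \frac{\Delta t}{(\Delta t)^2}\int_{t_{n-1}}^{t_n}\|\bm{u}_{tt}\|_{L^2(\Omega)}^2\,ds \cdot (\Delta t) $, i.e. $\Delta t\sum_n\|\bm{\tau}^n\|^2\le (\Delta t)^2\int_0^{t_n}\|\bm{u}_{tt}\|_{L^2(\Omega)}^2\,ds$; this explains the $(\Delta t)^2$ term in the bound.

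Next I would test the error equations with $G=\Pi_hL^n-L_h^n$, $\bm{v}=J_h\bm{u}^n-\bm{u}_h^n$, $q=I_hp^n-p_h^n$ and add them. Using the defining properties \eqref{eq:BhPi}, \eqref{eq:bhJ}, \eqref{eq:bhI} the $B_h$- and $b_h$-coupling terms cancel exactly as in Lemma~\ref{lemma:energy}, and we obtain
\begin{align*}
&\Big(\frac{(\bm{u}^n-\bm{u}_h^n)-(\bm{u}^{n-1}-\bm{u}_h^{n-1})}{\Delta t},J_h\bm{u}^n-\bm{u}_h^n\Big)
+\|\Pi_hL^n-L_h^n\|_{L^2(\Omega)}^2\\
&\quad+(\mathcal{N}(\bm{u}^n)-\mathcal{N}(\bm{u}_h^n),J_h\bm{u}^n-\bm{u}_h^n)=(\bm{\tau}^n,J_h\bm{u}^n-\bm{u}_h^n).
\end{align*}
I would then split $\bm{u}^n-\bm{u}_h^n=(\bm{u}^n-J_h\bm{u}^n)+(J_h\bm{u}^n-\bm{u}_h^n)$ in the time-difference term so that the first piece becomes a data term controlled by $\|(\bm{u}^n-J_h\bm{u}^n)-(\bm{u}^{n-1}-J_h\bm{u}^{n-1})\|_{L^2(\Omega)}$, and for the genuinely discrete part apply the identity $(a-b,a)=\tfrac12(|a|^2-|b|^2+|a-b|^2)$ with $a=J_h\bm{u}^n-\bm{u}_h^n$. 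Multiplying by $2\Delta t$, summing over $j=1,\dots,n$, and telescoping gives $\|J_h\bm{u}^n-\bm{u}_h^n\|_{L^2(\Omega)}^2+\sum_j\|(J_h\bm{u}^j-\bm{u}_h^j)-(J_h\bm{u}^{j-1}-\bm{u}_h^{j-1})\|^2+2\Delta t\sum_j\|\Pi_hL^j-L_h^j\|^2+2\Delta t\sum_j(\mathcal{N}(\bm{u}^j)-\mathcal{N}(\bm{u}_h^j),J_h\bm{u}^j-\bm{u}_h^j)$ on the left.

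For the nonlinear term I would, exactly as in Theorem~\ref{thm:u}, write $\mathcal{N}(\bm{u}^j)-\mathcal{N}(\bm{u}_h^j)=(\mathcal{N}(\bm{u}^j)-\mathcal{N}(J_h\bm{u}^j))+(\mathcal{N}(J_h\bm{u}^j)-\mathcal{N}(\bm{u}_h^j))$; the monotonicity bound \eqref{eq:property2} gives the coercive term $\alpha_{\text{min}}\|J_h\bm{u}^j-\bm{u}_h^j\|_{L^2(\Omega)}^2$ (kept on the left), and \eqref{eq:property1} together with the $L^4$ interpolation estimate \eqref{eq:uerror4} and the boundedness of $\|J_h\bm{u}^j\|_{L^4}+\|\bm{u}^j\|_{L^4}$ controls the remaining cross term, absorbing a factor $\tfrac{\alpha_{\min}}{2}\|J_h\bm{u}^j-\bm{u}_h^j\|^2$ into the left. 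All data terms — the consistency error, $(\Pi_hL^j-L^j,\Pi_hL^j-L_h^j)$ treated by Young's inequality to absorb $\tfrac12\|\Pi_hL^j-L_h^j\|^2$, the $J_h$- and $\Pi_h$-interpolation errors in $L^2$ and $L^4$, and the discrete-in-time interpolation difference — are then bounded via \eqref{eq:Lerror}, \eqref{eq:uerror}, \eqref{eq:uerror4}; the time-difference interpolation term $\sum_j\|(\bm{u}^j-J_h\bm{u}^j)-(\bm{u}^{j-1}-J_h\bm{u}^{j-1})\|^2$ is rewritten as $\Delta t\sum_j\|\int_{t_{j-1}}^{t_j}(I-J_h)\bm{u}_t\,ds/\Delta t\|^2\cdot\Delta t\le \Delta t\,h^{2(k+1)}\int_0^{t_n}\|\bm{u}_t\|_{H^{k+1}(\Omega)}^2\,ds$. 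Finally I would pass from $J_h\bm{u}^n-\bm{u}_h^n$ back to $\bm{u}^n-\bm{u}_h^n$ and from $\Pi_hL^j-L_h^j$ back to $L^j-L_h^j$ using the triangle inequality and \eqref{eq:uerror}, \eqref{eq:Lerror}. The main obstacle, as in the semi-discrete case, is handling the Forchheimer cross term uniformly: one needs the $L^4$-regularity hypothesis on $\bm{u}$ and the $L^4$-stability of $J_h\bm{u}^j$ (so that $\|J_h\bm{u}^j\|_{L^4}$ is bounded independently of $h$ and $j$), which follows from \eqref{eq:uerror4} and $\bm{u}\in C(0,T;H^{k+1}(\Omega))\hookrightarrow L^\infty(0,T;L^4(\Omega))$; everything else is the routine discrete Gronwall/telescoping bookkeeping.
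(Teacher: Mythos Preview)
Your proposal is correct and follows essentially the same route as the paper's proof: derive the fully discrete error equations with the consistency residual $R^n=\frac{\bm{u}^n-\bm{u}^{n-1}}{\Delta t}-\bm{u}_t(t_n)$, test with $(\Pi_hL^n-L_h^n,\,J_h\bm{u}^n-\bm{u}_h^n,\,I_hp^n-p_h^n)$, use the identity $(a-b,a)=\tfrac12(|a|^2-|b|^2+|a-b|^2)$, split the nonlinear term via \eqref{eq:property1}--\eqref{eq:property2}, apply Young's inequality, telescope, and finish with the interpolation estimates and the triangle inequality. One small remark: no discrete Gronwall is actually needed here---the $\alpha_{\min}\|J_h\bm{u}^j-\bm{u}_h^j\|_{L^2(\Omega)}^2$ coercivity absorbs all the cross terms directly via Young's inequality, exactly as you describe earlier in your outline, so the ``Gronwall'' in your final sentence is superfluous.
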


\begin{proof}

Replacing $L_h,\bm{u}_h,p_h$ by $L,\bm{u},p$ in \eqref{eq:SDG-fully-discrete} yields the following error equations
\begin{equation}
\begin{split}
(L^{n}-L_h^{n},G)&=\sqrt{\epsilon}B_h^*(\bm{u}^{n}-\bm{u}_h^{n},G), \\
(\frac{\bm{u}^{n}-\bm{u}^{n-1}}{\Delta t}-\frac{\bm{u}_h^{n}-\bm{u}_h^{n-1}}{\Delta t},\bm{v})+\sqrt{\epsilon}B_h(L^{n}-L_h^{n},\bm{v})
\qquad&\\
+b_h^*(p^{n}-p_h^{n},\bm{v})+(\mathcal{N}(\bm{u}^{n}),\bm{v})-(\mathcal{N}(\bm{u}_h^{n}),\bm{v})
&=(\frac{\bm{u}^{n}-\bm{u}^{n-1}}{\Delta t}-\bm{u}_t(:,t_n),\bm{v}),\\
-b_h(\bm{u}^{n}-\bm{u}_h^{n},q)&=0
\end{split}
\label{eq:error-full}
\end{equation}
for all $(G,\bm{v},q)\in \widehat{W}^h\times U^h\times P^h$.

Taking $\bm{v}=J_h\bm{u}^{n}-\bm{u}_h^{n}$,  $G=\Pi_hL^{n}-L_h^{n}$ and $q=I_hp^n-p_h^n$  in \eqref{eq:error-full}, we can obtain
\begin{equation}
\begin{split}
&\|\Pi_hL^{n}-L_h^{n}\|_{L^2(\Omega)}^2+\frac{1}{\Delta t}(J_h\bm{u}^{n}-\bm{u}_h^{n}-(J_h\bm{u}^{n-1}-\bm{u}_h^{n-1}),J_h\bm{u}^{n}-\bm{u}_h^{n})\\
&\;+(\mathcal{N}(J_h\bm{u}^{n})-\mathcal{N}(\bm{u}_h^{n}),J_h\bm{u}^{n}-\bm{u}_h^{n})\\
&=(R^n,J_h\bm{u}^{n}-\bm{u}_h^{n})+
(\Pi_hL^{n}-L^{n},\Pi_hL^{n}-L_h^{n})\\
&\quad+\frac{1}{\Delta t}(J_h\bm{u}^{n}-\bm{u}^{n}-(J_h\bm{u}^{n-1}-\bm{u}^{n-1}),J_h\bm{u}^{n}-\bm{u}_h^{n})
+(\mathcal{N}(J_h\bm{u}^{n})-\mathcal{N}(\bm{u}^{n}),J_h\bm{u}^{n}-\bm{u}_h^{n}),
\end{split}
\label{eq:err-full}
\end{equation}
where
\begin{align*}
R^n=\frac{\bm{u}^{n}-\bm{u}^{n-1}}{\Delta t}-\bm{u}_t(:,t_n).
\end{align*}
To bound $R^n$, we use the Taylor's expansion
\begin{align*}
\bm{u}(:,t_n)-\bm{u}(:,t_{n-1})=\Delta t\bm{u}_t(:,t_n)-\int_{t_{n-1}}^{t_n}(t-t_{n-1})\bm{u}_{tt}(:,t)\;ds.
\end{align*}
As a result, we have
\begin{align*}
R^n=-\frac{1}{\Delta t}\int_{t_{n-1}}^{t_n}(t-t_{n-1})\bm{u}_{tt}(:,t)\;ds.
\end{align*}
The Cauchy-Schwarz inequality yields
\begin{align*}
\|R^n\|_{{L^2(\Omega)}}^2\leq \frac{\Delta t}{3}\int_{t_{n-1}}^{t_n}\|\bm{u}_{tt}\|_{L^2(\Omega)}^2\;ds.
\end{align*}
An appeal to \eqref{eq:property1} and \eqref{eq:property2} implies
\begin{align*}
(\mathcal{N}(J_h\bm{u}^{n})-\mathcal{N}(\bm{u}_h^{n}),J_h\bm{u}^{n}-\bm{u}_h^{n})\geq \alpha_{\text{min}} \|J_h\bm{u}^{n}-\bm{u}_h^{n}\|_{L^2(\Omega)}^2
\end{align*}
and
\begin{align*}
\hspace{-0.1cm}(\mathcal{N}(J_h\bm{u}^{n})-\mathcal{N}(\bm{u}^{n}),J_h\bm{u}^{n}-\bm{u}_h^{n})&\leq \alpha_{\text{max}}\|J_h\bm{u}^{n}-\bm{u}^{n}\|_{L^2(\Omega)}\|J_h\bm{u}^{n}-\bm{u}_h^{n}\|_{L^2(\Omega)}+\\
&\;\beta_{\text{max}}\|J_h\bm{u}^{n}-\bm{u}^{n}\|_{L^4(\Omega)}(\|\bm{u}^{n}\|_{L^4(\Omega)}
+\|J_h\bm{u}^{n}\|_{L^4(\Omega)})\|J_h\bm{u}^{n}-\bm{u}_h^{n}\|_{L^2(\Omega)}.
\end{align*}
Thereby, we can infer from \eqref{eq:err-full} and the equality
$(a-b,a)=\frac{1}{2}(|a|^2-|b|^2+|a-b|^2)$ that
\begin{align*}
&\|\Pi_hL^{n}-L_h^{n}\|_{L^2(\Omega)}^2+\frac{1}{2\Delta t}(\|J_h\bm{u}^{n}-\bm{u}_h^{n}\|_{L^2(\Omega)}^2-\|J_h\bm{u}^{n-1}-\bm{u}_h^{n-1}\|_{L^2(\Omega)}^2\\
&\;+\|J_h\bm{u}^{n}-\bm{u}_h^{n}-(J_h\bm{u}^{n-1}-\bm{u}_h^{n-1})\|_{L^2(\Omega)}^2)
+\alpha_{\text{min}} \|J_h\bm{u}^{n}-\bm{u}_h^{n}\|_{L^2(\Omega)}^2\\
&\leq \|R^n\|_{L^2(\Omega)}\|J_h\bm{u}^{n}-\bm{u}_h^{n}\|_{L^2(\Omega)}
+\|\Pi_hL^{n}-L^{n}\|_{L^2(\Omega)}\|\Pi_hL^{n}-L_h^{n}\|_{L^2(\Omega)}\\
&\;+\Big(\frac{1}{\Delta t}\|J_h\bm{u}^{n}-\bm{u}^{n}-(J_h\bm{u}^{n-1}-\bm{u}^{n-1})\|_{L^2(\Omega)}
+\alpha_{\text{max}}\|J_h\bm{u}^{n}-\bm{u}^{n}\|_{L^2(\Omega)}\\
&\hspace{4cm}+\beta_{\text{max}}\|J_h\bm{u}^{n}-\bm{u}^{n}\|_{L^4(\Omega)}(\|\bm{u}^{n}\|_{L^4(\Omega)}
+\|J_h\bm{u}^{n}\|_{L^4(\Omega)})\Big)\|J_h\bm{u}^{n}-\bm{u}_h^{n}\|_{L^2(\Omega)}.
\end{align*}
Young's inequality yields
\begin{align*}
&\|\Pi_hL^{n}-L_h^{n}\|_{L^2(\Omega)}^2+\frac{1}{2\Delta t}\Big(\|J_h\bm{u}^{n}-\bm{u}_h^{n}\|_{L^2(\Omega)}^2-\|J_h\bm{u}^{n-1}-\bm{u}_h^{n-1}\|_{L^2(\Omega)}^2\\
&\;+\|J_h\bm{u}^{n}-\bm{u}_h^{n}-(J_h\bm{u}^{n-1}-\bm{u}_h^{n-1})\|_{L^2(\Omega)}^2\Big)
+\alpha_{\text{min}} \|J_h\bm{u}^{n}-\bm{u}_h^{n}\|_{L^2(\Omega)}^2\\
&\leq C\Big(\|R^n\|_{L^2(\Omega)}^2
+\|\Pi_hL^{n}-L^{n}\|_{L^2(\Omega)}^2
+\frac{1}{\Delta t}\|J_h\bm{u}^{n}-\bm{u}^{n}-(J_h\bm{u}^{n-1}-\bm{u}^{n-1})\|_{L^2(\Omega)}^2\\
&\hspace{5cm}+\|J_h\bm{u}^{n}-\bm{u}^{n}\|_{L^2(\Omega)}^2
+\|J_h\bm{u}^{n}-\bm{u}^{n}\|_{L^4(\Omega)}^2\Big).\\
\end{align*}
Changing $n$ to $j$, multiplying $2\Delta t$ on both sides, making a summation for $j=1$ to $n$, and using the fact that $J_h\bm{u}^0-\bm{u}_h^0=0$, we obtain
\begin{align*}
&2\Delta t\sum_{j=1}^n\|\Pi_hL^{j}-L_h^{j}\|_{L^2(\Omega)}^2+\|J_h\bm{u}^{n}-\bm{u}_h^{n}\|_{L^2(\Omega)}^2
+\sum_{j=1}^n
\|J_h\bm{u}^{j}-\bm{u}_h^{j}-(J_h\bm{u}^{j-1}-\bm{u}_h^{j-1})\|_{L^2(\Omega)}^2\\
&\;+2\Delta t\sum_{j=1}^n\alpha_{\text{min}} \|J_h\bm{u}^{j}-\bm{u}_h^{j}\|_{L^2(\Omega)}^2\\
&\leq C\Big(\Delta t\sum_{j=1}^n\Big(\|R^j\|_{L^2(\Omega)}^2+\|\Pi_hL^{j}-L^{j}\|_{L^2(\Omega)}^2
+\frac{1}{\Delta t}\|J_h\bm{u}^{j}-\bm{u}^{j}-(J_h\bm{u}^{j-1}-\bm{u}^{j-1})\|_{L^2(\Omega)}^2\\
&\hspace{7cm}+\|J_h\bm{u}^{j}-\bm{u}^{j}\|_{L^2(\Omega)}^2
+\|J_h\bm{u}^{j}-\bm{u}^{j}\|_{L^4(\Omega)}^2\Big)\Big).
\end{align*}
Now we estimate the right hand side. The Cauchy-Schwarz inequality and
 the interpolation error estimate \eqref{eq:uerror} yield
\begin{align*}
\frac{1}{\Delta t}\|J_h\bm{u}^{j}-\bm{u}^{j}-(J_h\bm{u}^{j-1}-\bm{u}^{j-1})\|_{L^2(\Omega)}^2&=\frac{1}{\Delta t}\|\int_{t_{j-1}}^{t_j}(\bm{u}_t-J_h\bm{u}_t)\;ds\|_{L^2(\Omega)}^2\\
&\leq C\frac{1}{\Delta t}\int_{t_{j-1}}^{t_j}h^{2(k+1)}|\bm{u}_t|_{H^{k+1}(\Omega)}^2\;ds.
\end{align*}
In addition, we also have from \eqref{eq:Lerror}, \eqref{eq:uerror} and \eqref{eq:uerror4} that
\begin{align*}
\|J_h\bm{u}^{j}-\bm{u}^{j}\|_{L^2(\Omega)}^2&\leq C h^{2(k+1)}\|\bm{u}^j\|_{H^{k+1}(\Omega)}^2,\\
\|J_h\bm{u}^{j}-\bm{u}^{j}\|_{L^4(\Omega)}^2&\leq C h^{2(k+1)}\|\bm{u}^{j}\|_{W^{k+1,4}(\Omega)}^2,\\
\|\Pi_hL^{j}-L^{j}\|_{L^2(\Omega)}^2&\leq C  h^{2(k+1)}\|L^j\|_{H^{k+1}(\Omega)}^2.
\end{align*}
The preceding arguments lead to
\begin{align*}
&2\Delta t\sum_{j=1}^n\|\Pi_hL^{j}-L_h^{j}\|_{L^2(\Omega)}^2
+\|J_h\bm{u}^{n}-\bm{u}_h^{n}\|_{L^2(\Omega)}^2+\sum_{j=1}^n
\|J_h\bm{u}^{j}-\bm{u}_h^{j}-(J_h\bm{u}^{j-1}-\bm{u}_h^{j-1})\|_{L^2(\Omega)}^2\\
&\;+2\Delta t\sum_{j=1}^n\alpha_{\text{min}} \|J_h\bm{u}^{j}-\bm{u}_h^{j}\|_{L^2(\Omega)}^2\\
&\leq C\Big((\Delta t)^2\int_0^{t_n}\|\bm{u}_{tt}\|_{L^2(\Omega)}^2\;ds
+h^{2(k+1)}\int_0^{t_n}(\|\bm{u}_t\|_{H^{k+1}(\Omega)}^2
+\|\bm{u}\|_{W^{k+1,4}(\Omega)}^2+\|L\|_{H^{k+1}(\Omega)}^2)\;ds\Big).
\end{align*}
Therefore, the proof is completed by using the triangle inequality and
the interpolation error estimates \eqref{eq:Lerror} and \eqref{eq:uerror}.

\end{proof}

\section{Numerical experiments}\label{sec:numerical}

In this section we will present several numerical tests to illustrate the
behavior of the fully discrete scheme. In particular, the robustness of our scheme
with respect to the coefficients will be investigated.
For simplicity, we only perform numerical simulation for $k=1$. Before describing the
numerical results, we present the algorithms that will be used.
At each time step $t_n$ for $1\leq n\leq N$, given an initial guess
$(L_h^{n,(0)},\bm{u}_h^{n,(0)},p_h^{n,(0)})$, Picard's iteration generates the sequences
$(L_h^{n,(m)},\bm{u}_h^{n,(m)},p_h^{n,(m)})$ for $m=1,2,3,\cdots$ by using the sequences of linear problems:
\begin{equation*}
\begin{split}
(L_h^{{n},(m)},G)&=\sqrt{\epsilon}B_h^*(\bm{u}_h^{{n},(m)},G), \\
(\frac{\bm{u}_h^{{n},(m)}-\bm{u}_h^{n-1}}{\Delta t},\bm{v})+\sqrt{\epsilon}B_h(L_h^{{n},(m)},\bm{v})
+b_h^*(p_h^{{n},(m)},\bm{v})\qquad&\\
+(\alpha\bm{u}_h^{n,(m)},\bm{v})
+(\beta|\bm{u}_h|^{n,(m-1)}\bm{u}_h^{n,(m)},\bm{v})&=(\bm{f}^{n,(m)},\bm{v}),\\
-b_h(\bm{u}_h^{n,(m)},q)&=0
\end{split}
\end{equation*}
for all $(G,\bm{v},q)\in \widehat{W}^h\times U^h\times P^h$.
We remark that we need to choose $\Delta t$ to be small enough so that the time discretization error
will not affect the convergence rates. Thus, for comparison we also employ second order difference for time
discretization:
\begin{equation*}
\begin{split}
(L_h^{{n},(m)},G)&=\sqrt{\epsilon}B_h^*(\bm{u}_h^{{n},(m)},G), \\
(\frac{3\bm{u}_h^{{n},(m)}-4\bm{u}_h^{n-1}+\bm{u}_h^{n-2}}{2\Delta t},\bm{v})+
\sqrt{\epsilon}B_h(L_h^{{n},(m)},\bm{v})
+b_h^*(p_h^{{n},(m)},\bm{v})\qquad&\\
+(\alpha\bm{u}_h^{n,(m)},\bm{v})
+(\beta|\bm{u}_h|^{n,(m-1)}\bm{u}_h^{n,(m)},\bm{v})&=(\bm{f}^{n,(m)},\bm{v}),\\
-b_h(\bm{u}_h^{n,(m)},q)&=0
\end{split}
\end{equation*}
for all $(G,\bm{v},q)\in \widehat{W}^h\times U^h\times P^h$. In this case,
we can exploit much large time step size without destroying the convergence rates.

For our simulations, we consider the exact solution given by
\begin{align*}\bm{u}=
\left(
  \begin{array}{c}
    \pi x^2(1-x)^2\sin(2\pi y)\sin(2\pi t) \\
    -2 x(1-x)(1-2x)\sin(\pi y)^2\sin(2\pi t)\\
  \end{array}
\right)
\end{align*}
and
\begin{align*}
p=(\sin(x)\cos(y)+\sin(1)(\cos(1)-1))\cos(2\pi t).
\end{align*}
We show the numerical results on square grids and our undisplayed numerical experiments
indicate that our method can be flexibly applied to general polygonal grids, we only display the
results on square grids for the sake of simplicity.
We will investigate the influence of the coefficients for our method.
For this purpose, we fix $\alpha=1$ and choose different values for $\epsilon$ and $\beta$,
and the numerical results at the final time $T=0.1$ are reported in
Table~\ref{table1}-Table~\ref{table2}. We can observe that optimal convergence rates
for velocity and pressure can be obtained for various values of $\epsilon$, and
the convergence rates for velocity gradient deteriorates when $\epsilon$ approaches zero,
which correlates with our previous results in \cite{LinaChungLam20}. In addition, the accuracy of
$L^2$ error of velocity remains almost the same for various values of $\epsilon$. On the other hand,
we can observe that optimal convergence rates can be obtained for various values of $\beta$
and the value of $L^2$ error of velocity is almost the same for various values of $\beta$.
Next, we show the numerical results by using second order difference for the time discretization
 in Table~\ref{table3}-Table~\ref{table4} with much large time step size, and similar performances
 can be observed.

\begin{table}[t]
\begin{center}
{\footnotesize
\begin{tabular}{ccc||c c|c c|c c}
\hline
Brinkman coefficient& Mesh &Time intervals& \multicolumn{2}{|c|}{$\|\bs{u}-\bs{u}_h\|_{0}$} & \multicolumn{2}{|c|}{$\|L-L_h\|_{0}$} & \multicolumn{2}{|c}{$\|p-p_h\|_{0}$}\\
\hline
$\epsilon$ &$h^{-1}$ &$N$ & Error & Order & Error & Order &  Error & Order \\
\hline
$1$&  2 &4  & 2.45e-2 &   N/A    &1.35e-01 &  N/A &6.63e-02 &   N/A \\
 &4 &16 & 6.13e-3 &   1.99   &5.65e-02 & 1.25 &2.37e-02 & 1.48  \\
 &8 &64 & 1.54e-3 &   1.99   &1.51e-02 & 1.89 &5.83e-03 & 2.02 \\
 &16&256& 3.85e-4 &   2.00   &3.90e-03 & 1.96 &1.35e-03 & 2.11 \\
 \hline
$10^{-2}$& 2 &4  & 2.22e-2 &   N/A    &1.45e-02 &  N/A &6.07e-03 &   N/A \\
 &4 &16 & 5.62e-3 &   1.98   &5.27e-03 & 1.46 &1.29e-03 & 2.23  \\
 &8 &64 & 1.48e-3 &   1.91   &1.42e-03 & 1.89 &2.68e-04 & 2.27 \\
 &16&256& 3.82e-4 &   1.95   &3.75e-04 & 1.92 &6.22e-05 & 2.11 \\
 \hline
$10^{-4}$& 2 &4  & 2.22e-2 &   N/A    &1.51e-03 &  N/A &5.93e-03 &   N/A \\
 &4 &16 & 5.62e-3 &   1.98   &6.04e-04 & 1.31 &1.26e-03 & 2.23  \\
 &8 &64 & 1.46e-3 &   1.94   &2.24e-04 & 1.43 &2.59e-04 & 2.28 \\
 &16&256& 3.69e-4 &   1.98   &9.63e-05 & 1.21 &6.05e-05 & 2.10 \\
 \hline
 $10^{-8}$& 2 &4  & 2.22e-2 &   N/A    &1.51e-05 &  N/A &5.93e-03 &   N/A \\
& 4 &16 & 5.62e-3 &   1.98   &6.05e-06 & 1.31 &1.26e-03 & 2.23  \\
& 8 &64 & 1.46e-3 &   1.94   &2.26e-06 & 1.42 &2.59e-04 & 2.28 \\
& 16&256& 3.69e-4 &   1.98   &9.98e-07 & 1.18 &6.05e-05 & 2.10 \\
\hline
\end{tabular}}
\caption{Backward Euler for time discretization: convergence history for $\alpha=1$ and $\beta=1$.}
\label{table1}
\end{center}
\end{table}

\begin{table}[t]
\begin{center}
{\footnotesize
\begin{tabular}{ccc||c c|c c|c c}
\hline
Forchheimer coefficient& Mesh &Time intervals& \multicolumn{2}{|c|}{$\|\bs{u}-\bs{u}_h\|_{0}$} & \multicolumn{2}{|c|}{$\|L-L_h\|_{0}$} & \multicolumn{2}{|c}{$\|p-p_h\|_{0}$}\\
\hline
$\beta$ &$h^{-1}$ &$N$ & Error & Order & Error & Order &  Error & Order \\
\hline
$1$&  2 &4  & 2.45e-2 &   N/A    &1.35e-01 &  N/A &6.63e-02 &   N/A \\
 &4 &16 & 6.13e-3 &   1.99   &5.65e-02 & 1.25 &2.37e-02 & 1.48  \\
 &8 &64 & 1.54e-3 &   1.99   &1.51e-02 & 1.89 &5.83e-03 & 2.02 \\
 &16&256& 3.85e-4 &   2.00   &3.90e-03 & 1.96 &1.35e-03 & 2.11 \\
 \hline
 $10^2$&2 &4  & 2.41e-2 &   N/A    &1.33e-01 &  N/A &6.97e-02 &   N/A \\
 &4 &16 & 6.10e-3 &   1.98   &5.56e-02 & 1.26 &2.33e-02 & 1.57  \\
 &8 &64 & 1.54e-3 &   1.98   &1.51e-02 & 1.88 &5.82e-03 & 2.00 \\
 &16&256& 3.85e-4 &   2.00   &3.89e-03 & 1.95 &1.35e-03 & 2.11 \\
 \hline
 $10^3$& 2 &4  & 2.27e-2 &   N/A    &1.42e-01 &  N/A &1.12e-01 &   N/A \\
 &4 &16 & 5.92e-3 &   1.93   &5.15e-02 & 1.46 &2.23e-02 & 2.32  \\
 &8 &64 & 1.53e-3 &   1.94   &1.46e-02 & 1.82 &5.85e-03 & 1.93 \\
 &16&256& 3.89e-4 &   1.98   &3.87e-03 & 1.91 &1.37e-03 & 2.10 \\
 \hline
$10^4$&2 &4  & 2.34e-2 &   N/A    &1.88e-01 &  N/A &4.75e-01 &   N/A \\
 &4 &16 & 5.84e-3 &   2.00   &6.64e-02 & 1.51 &8.76e-02 & 2.43  \\
 &8 &64 & 1.50e-3 &   1.96   &1.57e-02 & 2.08 &1.19e-02 & 2.88 \\
 &16&256& 3.89e-4 &   1.94   &3.88e-03 & 2.02 &2.15e-03 & 2.47 \\
\hline
\end{tabular}}
\caption{Backward Euler for time discretization: convergence history for $\alpha=1$ and $\epsilon = 1$.}
\label{table2}
\end{center}
\end{table}

\begin{table}[t]
\begin{center}
{\footnotesize
\begin{tabular}{ccc||c c|c c|c c}
\hline
Brinkman coefficient& Mesh &Time intervals& \multicolumn{2}{|c|}{$\|\bs{u}-\bs{u}_h\|_{0}$} & \multicolumn{2}{|c|}{$\|L-L_h\|_{0}$} & \multicolumn{2}{|c}{$\|p-p_h\|_{0}$}\\
\hline
$\epsilon$ &$h^{-1}$ &$N$ & Error & Order & Error & Order &  Error & Order \\
\hline
 1&2 &2  & 2.51e-2 &   N/A    &1.46e-01 &  N/A &6.05e-02 &   N/A \\
& 4 &4 & 6.17e-3 &   2.02   &5.79e-02 & 1.34 &2.45e-02 & 1.30  \\
& 8 &8 & 1.54e-3 &   2.00   &1.52e-02 & 1.93 &5.87e-03 & 2.06 \\
& 16&16& 3.85e-4 &   2.00   &3.91e-03 & 1.96 &1.35e-03 & 2.12 \\
\hline
$10^{-2}$ &2&2  & 2.22e-2 &   N/A &1.39e-02 &  N/A &6.12e-03 &   N/A \\
& 4 &4 & 5.64e-3 &   1.98   &5.29e-03 &1.40 &1.26e-03 & 2.27  \\
& 8 &8 & 1.49e-3 &   1.92   &1.49e-03 & 1.83 &2.69e-04 & 2.23 \\
& 16&16& 3.82e-4 &   1.96   &3.85e-04 & 1.95 &6.22e-05 & 2.11 \\
\hline
$10^{-4}$&2 &2  & 2.22e-2 &   N/A &1.44e-03 &  N/A &5.98e-03 &   N/A \\
& 4 &4 & 5.59e-3 &   1.98   &5.88e-04 &1.29 &1.22e-03 & 2.28  \\
& 8 &8 & 1.46e-3 &   1.94   &2.22e-04 & 1.41 &2.59e-04 & 2.24 \\
& 16&16& 3.69e-4 &   1.98   &9.61e-05 & 1.21 &6.06e-05 & 2.09 \\
\hline
$10^{-8}$ &2&2  &2.22e-2 &   N/A &1.44e-05 &  N/A &5.98e-03 &   N/A \\
& 4 &4 & 5.59e-3 &   1.98   &5.88e-06 &1.29 &1.22e-03 & 2.29  \\
& 8 &8 & 1.46e-3 &   1.94   &2.23e-06 & 1.40 &2.59e-04 & 2.24 \\
& 16&16& 3.69e-4 &   1.98   &9.95e-07 & 1.17 &6.05e-05 & 2.09 \\
\hline
\end{tabular}}
\caption{Second order difference for time discretization: convergence history for $\alpha=1$ and $\beta = 1$.}
\label{table3}
\end{center}
\end{table}

\clearpage

\begin{table}[t]
\begin{center}
{\footnotesize
\begin{tabular}{ccc||c c|c c|c c}
\hline
Forchheimer coefficient& Mesh &Time intervals& \multicolumn{2}{|c|}{$\|\bs{u}-\bs{u}_h\|_{0}$} & \multicolumn{2}{|c|}{$\|L-L_h\|_{0}$} & \multicolumn{2}{|c}{$\|p-p_h\|_{0}$}\\
\hline
$\beta$ &$h^{-1}$ &$N$ & Error & Order & Error & Order &  Error & Order \\
\hline
 1&2 &2  & 2.51e-2 &   N/A    &1.46e-01 &  N/A &6.05e-02 &   N/A \\
& 4 &4 & 6.17e-3 &   2.02   &5.79e-02 & 1.34 &2.45e-02 & 1.30  \\
& 8 &8 & 1.54e-3 &   2.00   &1.52e-02 & 1.93 &5.87e-03 & 2.06 \\
& 16&16& 3.85e-4 &   2.00   &3.91e-03 & 1.96 &1.35e-03 & 2.12 \\
\hline
$10^{2}$ &2&2  & 2.48e-2 &   N/A &1.44e-01 &  N/A &6.19e-02 &   N/A \\
& 4 &4 & 6.14e-3 &   2.01   &5.73e-02 &1.33 &2.45e-02 & 1.34  \\
& 8 &8 & 1.54e-3 &   1.99   &1.52e-02 & 1.92 &5.89e-03 & 2.06 \\
& 16&16& 3.85e-4 &   2.00   &3.90e-03 & 1.96 &1.35e-03 & 2.12 \\
\hline
$10^{3}$&2 &2  & 2.36e-2 &   N/A &1.40e-01 &  N/A &8.64e-02 &   N/A \\
& 4 &4 & 5.98e-3 &   1.97   &5.38e-02 &1.38 &2.64e-02 & 1.70  \\
& 8 &8 & 1.53e-3 &   1.96   &1.48e-02 &1.86&6.13e-03 & 2.11 \\
& 16&16& 3.86e-4 &   1.99   &3.89e-03 & 1.93 &1.38e-03 & 2.15 \\
\hline
$10^{4}$ &2&2  & 2.29e-2 &   N/A &1.47e-01 &  N/A &4.63e-01 &   N/A \\
& 4 &4 & 5.73e-3 &   2.00   &5.58e-02 &1.40 &1.10e-01 & 2.19  \\
& 8 &8 & 1.49e-3 &   1.94   &1.48e-02 & 1.92 &1.43e-02 & 2.82 \\
& 16&16& 3.82e-4 &   1.97   &3.85e-03 & 1.94 &2.26e-03 & 2.66 \\
\hline
\end{tabular}}
\caption{Second order difference for time discretization: convergence history for $\alpha=1$ and $\epsilon = 1$.}
\label{table4}
\end{center}
\end{table}

\section{Conclusion}

In this paper we have developed and analyzed a uniformly stable staggered DG method for
the unsteady Darcy-Forchheimer-Brinkman problem. The unique solvability of the
discrete formulation is proved, in addition, error analysis for both the semi-discrete and fully
discrete scheme is developed. Several numerical experiments are carried out to confirm the
theoretical findings. The numerical results indicate that our method is robust with respect to the
parameters, in particular, the accuracy of velocity remains almost the same for various values of
parameters.

\section*{Acknowledgments}

The research of Eric Chung is partially supported by the Hong
 Kong RGC General Research Fund (Project numbers 14304719 and 14302018)
 and CUHK Faculty of Science Direct Grant 2019-20.

\end{document}